\newcommand{\ex}{\mathbb{E}}
\newcommand{\prob}{\mathbb{P}}
\newcommand{\e}{\epsilon}
\newcommand{\ds}{\displaystyle}
\newcommand{\N}{\mathbb{N}}
\newcommand{\Z}{\mathbb{Z}}
\newcommand{\R}{\mathbb{R}}
\newcommand{\Var}{\mathrm{Var}}
\newcommand{\wt}[1]{\widetilde{#1}}
\newcommand{\lra}{\longrightarrow}
\newcommand{\mc}[1]{\mathcal{#1}}
\newcommand{\ld}{\lambda}
\renewcommand{\d}{\delta}
\newcommand{\F}{\mathcal{F}}
\newcommand{\la}{\lambda}
\newcommand{\dist}{\overset{d}{=}}
\newcommand{\Exp}{Exp}
\newcommand{\sign}{sign}
\newcounter{thm}
\theoremstyle{plain}
\newtheorem{theorem}[thm]{Theorem}
\numberwithin{thm}{section}
\newtheorem{defn}[thm]{Definition}
\newtheorem{lemma}[thm]{Lemma}
\newtheorem{prop}[thm]{Proposition}
\newtheorem{cor}[thm]{Corollary}
\newtheorem{P-C}[thm]{Theorem}
\newtheorem{H-L}[thm]{Theorem}
\theoremstyle{definition}
\newtheorem{remark}[thm]{Remark}
\begin{document}

\begin{frontmatter}

\title{Convergence of jump processes with stochastic intensity to Brownian motion with inert drift}

\runtitle{Approximating Brownian motion with inert drift by jump processes}

\begin{aug}

\author{\fnms{Clayton} \snm{Barnes}\thanksref{a}\ead[label=e1]{cbarnes@campus.technion.ac.il}}
\address[a]{\printead{e1}}

\runauthor{Clayton Barnes}

\affiliation{Technion-Israel's Institute of Technology}

\end{aug}

\begin{abstract}
Consider a random walker on the nonnegative lattice, moving in continuous time, whose positive transition intensity is proportional to the time the walker spends at the origin. In this way, the walker is a jump 
process with a stochastic and adapted jump intensity. We show that, upon Brownian scaling, the sequence of such processes converges to Brownian motion with inert drift (BMID). BMID was introduced by Frank Knight in 2001 and generalized by White in 2007. This confirms a conjecture of Burdzy and White in 
2008 in the one-dimensional setting.
\end{abstract}

\begin{keyword}
	\kwd{Brownian motion}
	\kwd{Discrete Approximation}
	\kwd{Random Walk}
	\kwd{Local Time}
\end{keyword}



\end{frontmatter}

\section{Introduction}

Brownian motion with inert drift (BMID) is a process $X$ that satisfies the SDE
\begin{align}\label{eq:SDE_BMID}
dX = dB + dL + KLdt,
\end{align}
where $K \geq 0$ is a constant and $L$ is the local time of $X$ at zero. This process 
behaves as a Brownian motion away from the origin but has a drift proportional to its local time at zero. Note that such a process is not Markovian because its drift depends on the past history. See Figure \ref{ch3:fig1} for sample path comparisons between reflected Brownian motion and BMID. BMID can be constructed path-by-path from a standard Brownian motion via the employment of a Skorohod map. This is discussed in more detail in Section \ref{section:BMID}. We consider continuous time processes $(X_n, V_n)$ on $2^{-n}\N \times \R$ such that for $K \geq 0$,
\begin{enumerate}[label = (\roman*)]
\item $V_n(t) := K2^n\cdot Leb(0 < s < t : X_n(s) = 0)$ is the scaled time $X_n$ spends at the origin.\\
\item $X_n$ is a jump process with positive jump intensity $2^{2n} + 2^nV_n(t)$ and downward jump intensity 
$2^{2n}$, modified appropriately so $X_n$ does not transition below zero.
\end{enumerate}
The existence of such a process and its rigorous definition is presented in Section \ref{ch3:section:MarkovProcessesMemory}. Intuitively, $X_n$ is a random walker on the lattice $2^{-n}\N$ whose transition rates depend linearly on the amount of time the walker spends at zero.
In other words, the positive jump rate of $X_n$ increases each time $X_n$ reaches zero.
We show that as the lattice size shrinks to zero, i.e.\ as $n \to \infty$, $(X_n, V_n)$ converges in distribution to $(X, V)$, where $X$ is BMID and $V = KL$ is its velocity. See Theorem \ref{ch3:thmDiscreteConvergence} and Corollary \ref{th:cor_discrete_conv} for precise statements. By setting $K = 0$ we recover the classical result that random walk on 
the nonnegative lattice converges to reflected Brownian motion.

\begin{figure}
\includegraphics[scale = .6]{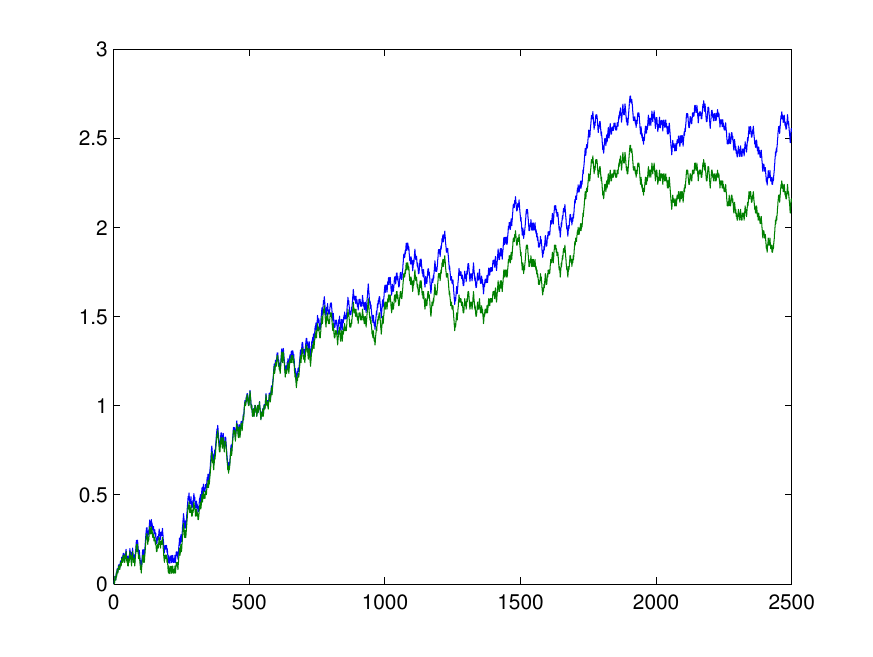}\label{ch3:fig1}
\caption{The figure shows a path of reflected Brownian motion and a path of BMID, each coming from the same 
Brownian sample path. This drift of BMID becomes evident as its domination over reflected Brownian motion grows with time. The drift of BMID comes from its contact with zero, which, in this figure, occurs in the beginning of the process.}
\end{figure}

\subsection{Outline} In Section 2 we introduce BMID and its construction using Skorohod maps. We also give an equivalent formulation of the process $(X, V)$. In Section 3 we introduce the necessary 
background on jump process with stochastic intensity and introduce the setting used by Burdzy and White in \cite{Burdzy:White}.
Section 4 contains the statement and proof of the main results, Theorem \ref{ch3:thmDiscreteConvergence} and Corollary \ref{th:cor_discrete_conv}. We conclude by briefly discussing BMID in a multidimensional setting in Section \ref{sec:mutli} .

\subsection{Background}The study of BMID began in 2001 when Knight \cite{Knight2001} described a Brownian particle reflecting above a particle with Newtonian dynamics. 
This two-particle system of Knight is equivalent in some sense to BMID in that the gap between the
Brownian particle and the Newtonian particle is BMID. See Section \ref{section:BMID}. For more background on BMID see \cite{white2007}, where White constructs a multidimensional analog to BMID; \cite{bass2010stationary}, where Bass, Burdzy, Chen and Hairer study the stationary distribution; \cite{Barnes}, where Barnes describes the hydrodynamic behavior of systems of Brownian motions with inert drift.

Burdzy and White studied similar processes from a discrete state point of view \cite{Burdzy:White}. They 
consider a pair of processes $(X, L)$ with state space $\mathscr{L}\times \R^d$, where $\mathscr{L}$ is a finite set, and where the transition rate of $X$ depends on $L$, the scaled time $X$
 has spent on previous states. See subsection \ref{Ch3:section:ClassC} for definitions.
The authors 
find necessary and sufficient conditions for such a process $(X, L)$ to have stationary distribution $\mu \times \gamma$, where $\mu$ is uniform on space and $\gamma$ is Gaussian. Burdzy and White make many conjectures involving 
approximating BMID, and its variants, and suggest the results of Bass, Burdzy, Chen, and Hairer \cite{bass2010stationary} concerning a multidimensional analog of BMID stem from a discrete approximation scheme where the continuous process of BMID is a limit of these processes  whose values take place in discretized space. The main result of this article confirms the discrete approximation scheme converges to the continuous model in the one 
dimensional setting.

BMID is just one example where a process with memory has a Gaussian stationary distribution. Gauthier \cite{gauthier2018central} studies diffusions whose drift is also dependent on the process history through a linear combination of sine and cosine functions. He shows the average displacement across time obtains a Gaussian stationary distribution as time approaches infinity. In \cite{barnes2019billiards}, Barnes, Burdzy, and Gauthier use this discrete approximation scheme, taking limits of Markov processes in the same class considered here, to demonstrate billiards with certain Markovian reflection laws have $\mu \times \gamma$ as the stationary measure for space and velocity,  where as above $\mu$ is the uniform stationary measure in the spatial component and $\gamma$ the Gaussian stationary measure in the velocity component.

In Section \ref{sec:mutli} we briefly discuss a multidimensional analog of BMID that inspired conjectures of Burdzy and White.


\section{An equivalent formulation of BMID}\label{section:BMID}
\noindent
In this section we describe the process $(X, Y, V)$, where $X$ is Brownian motion reflecting from the inert particle $Y$ and where $Y$ has velocity $V$.
We begin with a probability space $(\Omega, \prob, (\mathcal{F}_t)_{t \ge 0})$, with the filtration $(\mathcal{F
}_t)_{t \ge 0}$ satisfying the usual conditions, supporting a Brownian motion $B$.
\begin{theorem}[Existence and Uniqueness, Knight \cite{Knight2001}, White \cite{white2007}]\label{th:BMID}
Choose $K \geq 0$ and $v \in \R$.
There exists a unique strong solution of continuous $\mc{F}_t$-adapted processes $(X, Y, V)$ satisfying:
\begin{align}
\begin{split}
& X(t) =  B(t) +  L(t), \text{ for all $t \geq 0$, almost surely,}\\
&X(t) \geq Y(t), \text{ for all $t \geq 0,$ almost surely,} \\
&d Y = V(t)dt := (v-KL(t))dt, \text{ for all $t \geq 0$, almost surely,}\\
&L \text{ is nondecreasing, and is flat away from the set } \{s : X(s) = Y(s)\}.
\label{eq:sysLaw}
\end{split}
\end{align}
\end{theorem}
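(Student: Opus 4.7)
The plan is to reduce (\ref{eq:sysLaw}) to a single implicit Skorohod-type equation for the gap process $Z := X - Y$, and then solve it pathwise by a Banach fixed point argument. Writing $y_0 := Y(0)$ and combining the four conditions, any solution satisfies
\begin{align*}
Z(t) = Z(0) + B(t) - vt + L(t) - K\int_0^t L(s)\,ds, \qquad t \geq 0,
\end{align*}
with $Z \geq 0$, $L$ continuous nondecreasing with $L(0) = 0$, and $L$ flat off $\{Z = 0\}$. Conversely, from any such pair $(Z, L)$ we recover $X := B + L$, $Y := X - Z$, $V := v - KL$ satisfying the full system. Hence existence and uniqueness of $(X, Y, V)$ reduce to existence and uniqueness of $(Z, L)$ solving the gap equation.

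For the fixed point, given a candidate continuous nondecreasing $\ell : [0, T] \to [0, \infty)$ with $\ell(0) = 0$, define
\begin{align*}
M_\ell(t) := Z(0) + B(t) - vt - K\int_0^t \ell(s)\,ds,
\end{align*}
and let $\Psi(\ell)$ be the local-time output of the one-sided Skorohod map applied to $M_\ell$, i.e. $\Psi(\ell)(t) = \sup_{s \le t}(-M_\ell(s))_+$, so that $M_\ell + \Psi(\ell) \geq 0$ and $\Psi(\ell)$ is automatically flat off the zero set of $M_\ell + \Psi(\ell)$. A fixed point $L = \Psi(L)$ together with $Z := M_L + L$ is then, by construction, a solution of the gap equation, and every solution arises this way.

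The contraction is the key estimate. Because $\ell \mapsto M_\ell$ is affine,
\begin{align*}
\|M_{\ell_1} - M_{\ell_2}\|_{\infty,[0,T]} \leq K\,T\,\|\ell_1 - \ell_2\|_{\infty,[0,T]},
\end{align*}
and the Skorohod local-time map $f \mapsto \sup_{s \leq \cdot}(-f(s))_+$ is $1$-Lipschitz in sup norm. Thus $\|\Psi(\ell_1) - \Psi(\ell_2)\|_{\infty,[0,T]} \leq KT\,\|\ell_1 - \ell_2\|_{\infty,[0,T]}$, so $\Psi$ is a strict contraction on $C([0, T])$ as soon as $KT < 1$. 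Banach's theorem then yields a unique continuous fixed point $L$ on $[0, T]$, pathwise in $\omega$. Patching the construction over the successive intervals $[nT, (n+1)T]$, each with restarted initial data, produces a global solution; uniqueness on each short interval propagates to global uniqueness. Picard iteration preserves adaptedness, so the limiting $(X, Y, V)$ is $\mc{F}_t$-adapted, delivering a strong solution.

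The main obstacle is conceptual rather than computational: the process $L$ appears simultaneously inside the drift $-K\int_0^\cdot L\,ds$ and as the reflection term, which precludes a direct one-shot application of the Skorohod map and forces the fixed-point framework. Once the problem is recast as above, the Lipschitz factor $KT$ coming from the time integral is what overcomes the $1$-Lipschitz reflection map on short intervals, and no further structural difficulty arises; the argument is essentially the one implicit in Knight \cite{Knight2001} and made explicit by White \cite{white2007}.
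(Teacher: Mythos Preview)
The paper does not supply its own proof of this theorem; it is stated with attribution to Knight and White, and the surrounding discussion explicitly points to White's pathwise Skorohod map (Theorem~\ref{theorem:Whites_smap}) as the construction tool. Your fixed-point argument on the gap process is precisely the machinery underlying that map, so your approach coincides with the one the paper defers to.

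One correction: your gap equation has the wrong sign on the integral term. From $dY = (v - KL(t))\,dt$ one obtains $Y(t) = Y(0) + vt - K\int_0^t L(s)\,ds$, hence
\[
Z(t) = X(t) - Y(t) = Z(0) + B(t) - vt + L(t) + K\int_0^t L(s)\,ds,
\]
with $+K$ rather than $-K$; compare the paper's equation~\eqref{ch3:equivLawSDE}. This does not disturb the contraction estimate, since the Lipschitz constant of $\ell \mapsto M_\ell$ in sup norm is $|K|T = KT$ either way, so the Banach argument and the patching over short intervals go through unchanged once the sign is fixed.
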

\noindent
\begin{remark}\label{remark:BMID}
Flatness of $L$ off $\{s : X(s) = Y(s)\}$ means
\(
\int_0^\infty \mathbf{1}(X(s) \neq Y(s))\, \mathrm{d}L(s) = 0.
\)
One can use the Ito-Tanaka formula to show that $L$ is the local time of $X - Y$ at zero; see \cite[Th. 2.7]{white2007}. That is,
\[
L(t) = \lim_{\e \to 0}\frac{1}{2\e} \int_0^t\mathbf{1}(|X(s) - Y(s)| < \e)\, \mathrm{d}s,
\]
where the right hand side is the local time of $X - Y$ at zero.
\end{remark}

BMID together with its velocity is equivalent, in a certain sense, to the process $(X, Y, V)$ whose existence is given in Theorem \ref{th:BMID}.
We will refer to the following result by Skorohod.
\begin{lemma}[Skorohod, see \cite{KaratzasShreve}]\label{SkorohodLemma} Let $f \in C([0, T], \R)$
with $f(0) \geq 0.$ There is a unique, continuous, nondecreasing function
$m_f(t)$ such that
\begin{align*}
&x_f(t) = f(t) + m_f(t) \geq 0,\\
&m_f(0) = 0, \, m_f(t) \text{ is flat off } \{s : x_f(s) = 0\},
\end{align*}
that is given by 
\[
m_f(t) = \sup_{0 < s < t} [-f(s)] \lor 0.
\]
\label{classicSLemma}
\end{lemma}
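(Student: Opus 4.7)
My plan is existence first, then uniqueness. For existence I would verify directly that the explicit candidate
\[
m(t) := \sup_{0 < s \le t}[-f(s)] \lor 0
\]
meets each of the four requirements, setting $x := f + m$. Continuity of $m$ is inherited from the uniform continuity of $f$ on $[0, T]$; monotonicity is immediate because the supremum runs over nested intervals; $m(0) = 0$ because $f(0) \ge 0$ makes the $\lor 0$ active at the origin; and the nonnegativity $x \ge 0$ follows by letting $s \nearrow t$ in the defining supremum (using continuity of $f$ to absorb the strict inequality), which gives $m(t) \ge -f(t)$.

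The genuinely delicate item is the flatness of $m$ off $\{x = 0\}$, and this is the one place I would proceed with care. I would argue that on any connected component $I$ of the open set $\{t : x(t) > 0\}$, the running maximum $\sup_{u \le t}(-f(u))$ is attained strictly before $I$, because inside $I$ we have $m > -f$ and $m$ is already at least the running maximum. Hence $m$ is constant on $I$, so the Stieltjes measure $dm$ charges no portion of $\{x > 0\}$, yielding the integral form $\int_0^\infty \mathbf{1}(x(s) > 0)\, dm(s) = 0$ of flatness.

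For uniqueness, suppose $(\tilde x, \tilde m)$ is another admissible pair with $\tilde m$ flat off $\{\tilde x = 0\}$. Then $h := m - \tilde m = x - \tilde x$ vanishes at $0$. Since $m$ and $\tilde m$ are continuous functions of bounded variation, integration by parts gives $h(t)^2 = 2\int_0^t h(s)\, dh(s)$, and splitting the right-hand side yields
\[
\tfrac12 h(t)^2 = \int_0^t (x - \tilde x)\, dm \;-\; \int_0^t (x - \tilde x)\, d\tilde m = -\int_0^t \tilde x\, dm \;-\; \int_0^t x\, d\tilde m,
\]
where the cancellations use $\int x\, dm = 0$ and $\int \tilde x\, d\tilde m = 0$ from the flatness conditions. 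Both remaining integrals are nonnegative because $x, \tilde x \ge 0$ and $m, \tilde m$ are nondecreasing, so $h(t)^2 \le 0$, forcing $h \equiv 0$. Combined with the existence check above, this identifies the unique solution with the explicit formula. The main obstacle throughout is genuinely just the flatness verification; existence is a routine check and uniqueness is almost automatic once flatness is in place.
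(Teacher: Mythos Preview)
Your argument is correct and is essentially the standard proof one finds in the cited reference. Note, however, that the paper does not supply its own proof of this lemma: it is stated as a classical fact with a pointer to Karatzas--Shreve, so there is nothing in the paper to compare your approach against. Your existence check and the integration-by-parts uniqueness argument are exactly the textbook route; the only minor remark is that the paper writes the supremum over $0 < s < t$ whereas you use $0 < s \le t$, but by continuity of $f$ these coincide.
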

\begin{remark}\label{remark:Flattness_Levy'sTheorem}
The classical L\'{e}vy's theorem states that for a Brownian motion $B$, $x_B$ is distributed as $|B|.$ See \cite[Section 3.6C]{KaratzasShreve}.
\end{remark}
\noindent
To see the equivalence between BMID and the process $(X, Y, V)$ from Theorem \ref{th:BMID},
consider the gap process $G(t) = X(t) - Y(t)$. Obviously $G \geq 0$, almost surely, and from \eqref{eq:sysLaw} it follows that (when $v = 0$)
\begin{align}
dG = dB + dL + KLdt, \label{ch3:equivLawSDE}
\end{align}
where $L$ is continuous, nondecreasing, and flat off $U^{-1}(0).$ From the comment in Remark \ref{remark:BMID} on local time, $G$ is a reflected diffusion whose drift is proportional to its local time at zero. Consequently, the gap process $G$ is BMID as it satisfies \eqref{eq:SDE_BMID}.

Assume we have a pair of processes $(U, V)$ adapted to a continuous filtration $\mc{F}_t$ that supports a
Brownian motion $B$, and that for fixed $K \geq 0, v \in \R$
\begin{align}
\begin{split}
U(t) &= B(t) + \int_0^tV(x)\, \mathrm{d}x, \\
V(t) &= -v + KM^U(t), \\ 
M^U(t) &= \sup_{0< s < t}[-U(s)] \lor 0.
\label{ch3:eq:EquivSys}
\end{split}
\end{align}
In the system \eqref{ch3:eq:EquivSys}, it is clear from the definition of $M^U$ that 
$U + M^U \geq 0$ and it follows from the Skorohod Lemma \ref{SkorohodLemma} that $M^U$ is flat off the set $\{ s : U(s) + M^U(s) = 0\}.$
Therefore 
\[
B(t) + M^U \geq -\int_0^tV(s)\, \mathrm{d}s,
\] and $M^U$ is flat off of
$\{s : B(t) + M^U = -\int_0^tV(s)\, \mathrm{d}s \}.$ Consequently,
\[
\ds \left(B(t) + M^U(t), -\int_0^tV(s)\, \mathrm{d}s, -V(t)\right)
\]
satisfies the original equation \eqref{eq:sysLaw} with respect to the filtration $\mc{F}_t$. Similarly, one can use the uniqueness statement in Skorohod's Lemma to
go from a solution of \eqref{ch3:eq:EquivSys} to a solution of \eqref{eq:sysLaw}.
This demonstrates the equivalence of the two systems
\eqref{eq:sysLaw} and \eqref{ch3:eq:EquivSys} in the sense that if one solution exists for a given probability
space $(\Omega, \prob, \mc{F}_t)$, where $\mc{F}_t$ supports a given Brownian motion, then the other 
solution can be given by a path-by-path transformation.

Existence of a solution to \eqref{eq:sysLaw} was first shown by Knight in \cite{Knight2001}. A strong solution to a more general process was attained via the employment
of a Skorohod map by David White \cite{white2007} in a more general version of Theorem \ref{theorem:Whites_smap} given below.
\begin{theorem}[White, \cite{white2007}]\label{theorem:Whites_smap}
For every $f \in C([0, T], \R), K \geq 0, v \in \R$ there is a unique pair of continuous 
functions $(I, V)$ such that
\begin{align}
\begin{split}
x(t) := f(t) + I(t),\\
V(t) = v + Km(t),\\
I(t) = \int_0^tV(s)\, \mathrm{d}s,\\
m(t) = \sup_{0 < s < t}[-x(s) \lor 0].
\end{split}
\end{align}
\end{theorem}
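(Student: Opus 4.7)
The plan is to reformulate the coupled system as a fixed-point equation for the velocity $V$ alone and then to apply the Banach contraction principle on short time intervals, concatenating the pieces to cover all of $[0,T]$.

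First, for $T_0 \in (0,T]$ to be chosen, I would define an operator $\Phi \colon C([0,T_0],\mathbb{R}) \to C([0,T_0],\mathbb{R})$ by
$(\Phi V)(t) := v + K\, m_V(t)$, where $I_V(t) := \int_0^t V(s)\,ds$, $x_V(t) := f(t) + I_V(t)$, and $m_V(t) := \sup_{0 < s \leq t}[-x_V(s)] \vee 0$. A fixed point $V = \Phi V$ immediately yields a solution of the system in the theorem by setting $I := I_V$, $x := x_V$, $m := m_V$; conversely, any solution must have its velocity component fixed by $\Phi$. Thus existence and uniqueness of $(I,V)$ is equivalent to existence and uniqueness of a fixed point for $\Phi$.

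The contraction estimate is short. The classical Skorohod map $h \mapsto \sup_{s \le \cdot}[-h(s)] \vee 0$ is $1$-Lipschitz in the sup norm (this is implicit in Lemma \ref{classicSLemma}), so for $V_1,V_2 \in C([0,T_0])$,
\[
\|m_{V_1} - m_{V_2}\|_\infty \leq \|x_{V_1} - x_{V_2}\|_\infty = \|I_{V_1} - I_{V_2}\|_\infty \leq T_0\,\|V_1 - V_2\|_\infty,
\]
with all norms over $[0,T_0]$. Hence $\|\Phi V_1 - \Phi V_2\|_\infty \leq KT_0 \|V_1-V_2\|_\infty$, which is a strict contraction once $KT_0 < 1$ (the case $K=0$ gives $V \equiv v$ trivially). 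Banach's fixed point theorem then produces a unique continuous solution on $[0,T_0]$.

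The step I expect to be the main obstacle is extending past $T_0$, since the theorem requires the solution on a fixed $[0,T]$ rather than on a small interval determined by $K$. If a unique solution has been constructed up to some time $t_*$, then for $t \ge t_*$ the running supremum splits as $m(t) = \max\bigl(m(t_*),\, \sup_{t_* < s \leq t}[-x(s)] \vee 0\bigr)$, so the shifted operator on $[t_*, t_* + T_0]$ takes the form $V \mapsto v + K\max\bigl(m(t_*), \sup_{t_* < s \leq \cdot}[-x_V(s)] \vee 0\bigr)$ with $x_V(s) = f(s) + I(t_*) + \int_{t_*}^s V(u)\,du$. Taking a maximum with a constant is $1$-Lipschitz, so the identical contraction estimate applies and yields unique existence on $[t_*, t_* + T_0]$. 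Since $T_0$ depends only on $K$, iterating $\lceil T/T_0\rceil$ times and concatenating produces the desired unique pair $(I,V) \in C([0,T])^2$, and the resulting $m$ is nondecreasing and flat off $\{x = 0\}$ by construction, completing the proof.
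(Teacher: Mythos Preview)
Your proof is correct. The paper itself does not supply a proof of this theorem; it is quoted as a result of White \cite{white2007} and used as a black box (to guarantee existence and uniqueness of the limiting system \eqref{ch3:eq:EquivSys}). So there is no ``paper's own proof'' to compare against here.

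That said, your argument is precisely the standard route to results of this type and is essentially how such Skorohod-map fixed-point problems are handled in White's paper: recast the system as a fixed-point equation for the velocity, use the $1$-Lipschitz property of the running-supremum map together with the smoothing $\|I_{V_1}-I_{V_2}\|_\infty\le T_0\|V_1-V_2\|_\infty$ to obtain a contraction on short intervals, and then concatenate. Your treatment of the extension step is the right one: the identity $m(t)=\max\bigl(m(t_*),\sup_{t_*<s\le t}[-x(s)]\vee 0\bigr)$ and the fact that $a\mapsto\max(c,a)$ is $1$-Lipschitz give the same contraction constant $KT_0$ on each subinterval, independent of the accumulated data $(I(t_*),m(t_*))$, so finitely many iterations cover $[0,T]$. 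The final sentence about $m$ being nondecreasing and flat off $\{x+m=0\}$ is not part of the theorem's assertion (it follows from Lemma~\ref{classicSLemma} applied to $x$), but it does no harm.
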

\begin{remark}
In Remark \ref{remark:Flattness_Levy'sTheorem} it is mentioned that replacing the function $f$ with a Brownian motion in the formulation
of Skorohod's Lemma \ref{SkorohodLemma} gives rise to a representation of reflected Brownian motion.
Similarly, when replacing $f$ in Theorem \ref{theorem:Whites_smap} pathwise
by Brownian motion, the corresponding process $(x, -I, V)$ is a solution to \eqref{ch3:eq:EquivSys}. Note that Skorohod's Lemma \ref{SkorohodLemma} implies that $m(t)$ in Theorem \ref{theorem:Whites_smap} is the unique monotonically increasing, continuous, function which is flat off of the level set $\{s : x(s) + m(s) = 0\}$ such that $x + m$ is nonnegative.
\end{remark}

\begin{remark}\label{remark:BMID_equiv_gap}
One can also see from the above arguments that $(U, V)$ of \eqref{ch3:eq:EquivSys} solves 
\[
dU(t) = dB(t) + dL(t) + V(t)dt, \ dV(t) = KL(t)dt, \ V(0) = -v,
\]
where $L$ is the local time of $U$ at zero.
\end{remark}

\section{Markov Processes with Memory}\label{ch3:section:MarkovProcessesMemory}
\noindent
The title of this section seems contradictory because Markov processes lose their memory when conditioning on their current location. The processes considered are pairs of processes, one process taking values in ``space," and the other process storing the history of the space-valued process. The transition rate of the space-valued process depends on this stored history.
We let $\mc{C}$ denote the class of such processes which we introduce more formally in this section. We will later construct a sequence of processes in $\mc{C}$ that will approximate BMID. First, we review well known facts of Poisson processes and point process with stochastic intensity.
For reference, see Br\'emaud's description of a doubly-stochastic point process in \cite[Chapter 2]{Bremaud}.

\subsection{Non-homogeneous Poisson processes}\label{ch3:subsection:point_processes}
A non-homogeneous Poisson process with a nonnegative locally integrable rate (or intensity) function $\ld(t)$ is a process $N$ such that
\begin{enumerate}[label = (\roman*)]
\item $N(0) = 0$, a.s.
\item $N$ has independent increments,
\item $N$ is RCLL, a.s.
\item $N(a, b] = N(b) - N(a) \dist $ Poisson($\int_a^b\ld(s)$).
\end{enumerate}
If we let $T = \inf\{t :  N(t) > 0\}$ be the first jump time of $N$, then
\[
\ds \prob(T > t) = \prob(N(t) = 0)  = e^{-\int_0^t\ld(s)\, \mathrm{d}s}.
\]


\begin{lemma}\label{ch3:jumpRateStochDom}
Let $\ld_1, \ld_2$ be two rate functions such that $\ld_1(t) \leq \ld_2(t)$ for all $t \geq 0$ and
let $T_1, T_2$ be the first jump time of their corresponding Poisson process. Then $T_1$ stochastically dominates
$T_2.$
\end{lemma}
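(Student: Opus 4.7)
The plan is to read off the result directly from the explicit formula for the first jump time of a non-homogeneous Poisson process displayed just before the lemma statement. Recall that for a rate function $\lambda$, the first jump time $T$ satisfies
\[
\mathbb{P}(T > t) = \mathbb{P}(N(t) = 0) = \exp\!\left(-\int_0^t \lambda(s)\,\mathrm{d}s\right).
\]
So the proof reduces to a one-line monotonicity computation, and no coupling needs to be constructed explicitly.

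First I would fix $t \geq 0$ and apply the formula to both processes to obtain
\[
\mathbb{P}(T_i > t) = \exp\!\left(-\int_0^t \lambda_i(s)\,\mathrm{d}s\right), \quad i = 1, 2.
\]
Since $\lambda_1(s) \leq \lambda_2(s)$ pointwise and both are nonnegative and locally integrable, integrating preserves the inequality, so $\int_0^t \lambda_1(s)\,\mathrm{d}s \leq \int_0^t \lambda_2(s)\,\mathrm{d}s$. Exponentiating the negatives reverses the inequality, giving $\mathbb{P}(T_1 > t) \geq \mathbb{P}(T_2 > t)$ for every $t \geq 0$. This is exactly the definition of stochastic domination of $T_2$ by $T_1$, completing the proof.

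There is essentially no obstacle here; the result is a direct corollary of the survival function formula. If a cleaner pathwise statement were desired, one could alternatively give a coupling proof by writing $T_i = \inf\{t : \int_0^t \lambda_i(s)\,\mathrm{d}s \geq E\}$ for a single exponential random variable $E$ of rate one, from which $T_1 \geq T_2$ holds almost surely; this stronger pathwise statement immediately implies stochastic dominance. Either route is short, and I would opt for the survival-function calculation since the formula has already been recorded in the surrounding text.
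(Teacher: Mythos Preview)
Your argument is correct. The paper does not actually supply a proof of this lemma; it states the result and moves on, treating it as a classical fact. Your survival-function computation using the displayed formula $\prob(T>t)=e^{-\int_0^t\lambda(s)\,\mathrm{d}s}$ is exactly the intended one-line justification, and the optional coupling remark via a single unit exponential is also fine (and is in fact used implicitly later in the paper, e.g.\ in \eqref{eq:Exponential_Rep_JumpTimes}).
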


\noindent
In \cite[Chapter II]{Bremaud}, Br\'emaud discusses the notion of point processes adapted to a filtration $\mc{F}_t$ whose intensity $\ld(s)$ is not a deterministic function but rather a process adapted to $\mc{F}_t$ with certain conditions.

\begin{defn}\label{ch3:Def:DoublySPP}\cite[II]{Bremaud}
Let $N_t$ be a point process adapted to the filtration $\mc{F}_t$ and let $\ld_t$ be a nonnegative $\mc{F}_t$-progressive process such that $\int_0^t\ld_s\, \mathrm{d}s < \infty$ almost surely for each $t \in [0, T]$. If
\begin{align}
\ex\left(\int_0^\infty C_s\, \mathrm{d}N_s \right) = \ex\left(\int_0^\infty C_s\ld_s\, \mathrm{d}s\right),\label{ch3:intensityDef}
\end{align}
for all nonnegative $\mc{F}_t$-predictable processes $C_t$
then we say $N_t$ has stochastic intensity $\ld_t.$
\end{defn}
\begin{remark}
In the proofs of later results we will refer to point processes with a given intensity or jump/step size. By a 
point process of jump/step size $a > 0$ and (stochastic) intensity $\ld$ we mean a process $aN_t$ where $N_t$ is a 
point process with (stochastic) intensity $\ld.$ By the \emph{positive} (resp. \emph{negative}) jump 
process for a process we mean the process $aN_t$ (resp. $-aN_t$). For example, a process with jump size $2^{-n}$
with positive jump rate $\ld_1(t)$, and negative jump rate $\ld_2(t),$ is $2^{-n}(N_1 - N_2)$ where $N_i$
is a point process with (stochastic) rate $\ld_i.$
\end{remark}
\noindent
Some well known facts of Poisson processes have analogous results for Poisson processes with stochastic intensities, which we list below.
\begin{lemma}\label{ch3:SumDoublySPP}
Let $N_1, N_2$ be two independent point processes with stochastic intensities $\ld^1, \ld^2$ 
adapted to filtrations $\mc{F}^1, \mc{F}^2$ respectively. Then $N_1 + N_2$ is a point process with stochastic intensity $\ld^1 + \ld^2$, adapted to $\mc{F}_t := \sigma\left(\mc{F}^1_t, \mc{F}^2_t\right).$
\end{lemma}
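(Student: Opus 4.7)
My plan is to reduce Lemma \ref{ch3:SumDoublySPP} to the martingale characterization of stochastic intensity and then invoke the standard fact that the (local) martingale property is preserved when the filtration is enlarged by an independent sub-$\sigma$-algebra. The computation (\ref{ch3:intensityDef}) is cumbersome to attack head-on because the test process $C_t$ is only $\mathcal{F}_t$-predictable, not $\mathcal{F}^i_t$-predictable, so the individual intensity hypotheses on $N_i$ cannot be applied directly.

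First I would recall (see \cite[Ch.~II]{Bremaud}) that the condition (\ref{ch3:intensityDef}) is equivalent to the statement that $M^i(t) := N_i(t) - \int_0^t \lambda^i_s\, \mathrm{d}s$ is an $\mathcal{F}^i_t$-local martingale for $i=1,2$. This reformulation replaces the quantifier over all nonnegative predictable $C$ by a single process-level assertion that is easier to propagate to a larger filtration.

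Next I would show that each $M^i$ remains a local martingale with respect to the enlarged filtration $\mathcal{F}_t = \sigma(\mathcal{F}^1_t, \mathcal{F}^2_t)$. Since $N_1$ and $N_2$ are independent, the $\sigma$-algebras $\mathcal{F}^1_\infty$ and $\mathcal{F}^2_\infty$ are independent, so for $s < t$ and any rectangle $A \times B$ with $A \in \mathcal{F}^1_s$, $B \in \mathcal{F}^2_s$ one computes
\[
\mathbb{E}\bigl[(M^1(t) - M^1(s))\mathbf{1}_A \mathbf{1}_B\bigr] = \mathbb{E}\bigl[(M^1(t) - M^1(s))\mathbf{1}_A\bigr]\,\mathbb{P}(B) = 0,
\]
using independence and the $\mathcal{F}^1$-martingale property of $M^1$. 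A $\pi$-$\lambda$ (monotone class) argument extends this from rectangles to all of $\mathcal{F}_s$, and the local-martingale case is handled by a common localizing sequence of stopping times. The same argument applies to $M^2$.

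Finally, linearity gives that $M^1 + M^2 = (N_1 + N_2) - \int_0^\cdot (\lambda^1_s + \lambda^2_s)\, \mathrm{d}s$ is an $\mathcal{F}_t$-local martingale, and since $\lambda^1 + \lambda^2$ is $\mathcal{F}_t$-progressive (each summand is progressive for its own marginal filtration, hence for the larger one), applying the martingale characterization in the reverse direction identifies $\lambda^1 + \lambda^2$ as the $\mathcal{F}_t$-stochastic intensity of $N_1 + N_2$, yielding (\ref{ch3:intensityDef}) for the sum. The main (and only nontrivial) obstacle is the preservation of the martingale property under independent enlargement; once this standard result is invoked, the rest is a one-line linearity computation.
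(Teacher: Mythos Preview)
Your argument is correct and considerably more explicit than the paper's, which gives only a two-line sketch: it observes that independence of $N_1$ and $N_2$ implies they have no common jump times, so that $(N_1,N_2)$ is a multivariate point process, and then cites \cite[II.T15]{Bremaud} directly. Your route via the martingale characterization and preservation of the (local) martingale property under independent filtration enlargement is essentially what lies behind that citation, unpacked for this special case; you gain self-containment at the cost of writing out a monotone-class argument the paper simply outsources.

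Two small points to tighten. First, you omit the one ingredient the paper does spell out: independence forces $N_1$ and $N_2$ to have a.s.\ disjoint jump times, which is what ensures $N_1+N_2$ is again a \emph{simple} point process (jumps of size one) and hence an object to which Definition~\ref{ch3:Def:DoublySPP} applies; your compensator computation goes through regardless, but the conclusion ``$N_1+N_2$ is a point process with intensity $\lambda^1+\lambda^2$'' presupposes this. Second, the step ``$N_1,N_2$ independent $\Rightarrow$ $\mathcal{F}^1_\infty,\mathcal{F}^2_\infty$ independent'' is only automatic when the $\mathcal{F}^i$ are the natural filtrations of $N_i$ (or otherwise generated from independent data); this is the intended reading here, but it is worth saying explicitly since the lemma allows arbitrary filtrations to which the $N_i$ are adapted.
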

\begin{proof}[Sketch]
The fact that $N_1, N_2$ are independent implies the two processes
do not have common jumps, so that $(N_1, N_2)$ is a multivariate point process. The result follows from \cite[T15, Chapter II.2]{Bremaud}.
\end{proof}
\begin{lemma}\label{ch3:SPPstochDom}
Let $N_1$ be a point process with stochastic intensity $\ld^1(t) \geq \ld$, almost surely, for some $\ld \in \R_+.$ Then 
we can enlarge the probability space to support a Poisson point process $N_2$ with constant intensity $\ld$ and a point process $N_3$ of stochastic intensity $\ld^3 = \ld^1 - \ld$ such that $N_3$ is independent of $N_2$ and
 $N_2 + N_3$ has stochastic intensity $\ld^1$.
\end{lemma}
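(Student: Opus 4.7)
The plan is to construct $N_2$ and $N_3$ independently on a product enlargement of the original probability space, and then appeal to Lemma \ref{ch3:SumDoublySPP} for the additivity of their stochastic intensities. The enlargement does two jobs at once: it supplies the auxiliary randomness needed to realize $N_3$ with the prescribed nondeterministic intensity, and it supplies a fresh homogeneous Poisson process $N_2$ of rate $\ld$ that is independent of everything previously defined.

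Because $\ld^1 - \ld \geq 0$ is a nonnegative $\F_t$-progressive process, the standard random-time-change recipe produces a point process with this stochastic intensity: enlarge $(\Oa,\prob,\F_t)$ by an independent rate-$1$ Poisson process $M$ and set
\[
N_3(t) := M\Bigl(\int_0^t \bigl(\ld^1(s)-\ld\bigr)\,\md s\Bigr).
\]
In the enlarged filtration $\F'_t$, one checks directly that $N_3$ satisfies Definition \ref{ch3:Def:DoublySPP} with intensity $\ld^1 - \ld$. I would then enlarge once more by an independent product with a space $(\Oa'',\prob'',\F''_t)$ carrying a homogeneous Poisson process $N_2$ of rate $\ld$, and set $\mc{G}_t := \F'_t \vee \F''_t$. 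The product construction builds in $N_2 \perp N_3$ by fiat.

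What is left is to show (i) $N_2$ retains $\mc{G}$-stochastic intensity $\ld$, (ii) $N_3$ retains $\mc{G}$-stochastic intensity $\ld^1 - \ld$, and (iii) $N_2 + N_3$ has $\mc{G}$-stochastic intensity $\ld^1$. Item (iii) is immediate from (i), (ii), and Lemma \ref{ch3:SumDoublySPP}, since the two factors of $\mc{G}$ are independent. For (i) and (ii), a routine Fubini argument using the product structure suffices: test any $\mc{G}$-predictable integrand $C_s$ of the form $g(\omega')h(\omega'')$ with $g$ being $\F'_s$-predictable and $h$ being $\F''_s$-predictable, apply Definition \ref{ch3:Def:DoublySPP} on each factor separately, and then extend to all $\mc{G}$-predictable $C_s$ by a monotone class argument.

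The main (and modest) obstacle is this lifting of intensities from each single-factor filtration to the joint filtration $\mc{G}_t$. Everything else is either a direct appeal to a previously established lemma or a textbook construction. If instead the intended reading is that $N_2+N_3$ should pathwise coincide with $N_1$, then one would need a thinning construction (each jump of $N_1$ is independently kept with probability $\ld/\ld^1(T_k-)$); however, the stochastic thinning probability generally breaks independence of $N_2$ and $N_3$, so the independent-construction approach above is the natural route to the statement as written.
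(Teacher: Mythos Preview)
Your proposal is correct and follows essentially the same approach as the paper's sketch: enlarge the space to support an independent Poisson process $N_2$ of rate $\ld$ and an independent point process $N_3$ with stochastic intensity $\ld^1-\ld$, then invoke Lemma~\ref{ch3:SumDoublySPP} to conclude that $N_2+N_3$ has intensity $\ld^1$. You supply more detail than the paper does (the explicit time-change construction of $N_3$ and the monotone-class lifting of intensities to the product filtration), and your closing paragraph correctly identifies that the lemma only claims equality of intensities, not pathwise coincidence with $N_1$.
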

\begin{remark}
It is clear that one can generate a Poisson point process $N_2$ with constant intensity $\ld$ which
is independent of $N_1$. Lemma \ref{ch3:SPPstochDom} could be generalized to include more general lower bounds than a constant, however, we don't require this and sketch the proof only in the case when $N_2$ has constant intensity.
\end{remark}
\begin{proof}[Sketch]
Enlarge the probability space to support two independent processes $N_2', N_3'$ where $N_2'$ is a Poisson point process of rate $\ld$ and $N_3'$ is a point process with stochastic rate $\ld^3 = \ld^1 - \ld.$ By Lemma \ref{ch3:SumDoublySPP}, $N_1 \dist N_2 + N_3$, and the processes are adapted to the filtration generated by $(N_1, N_2, N_3).$
\end{proof}

\subsection{Class $\mc{C}$ of Markov Processes with Memory} \label{Ch3:section:ClassC}
As mentioned, Burdzy and White \cite{Burdzy:White} study continuous time Markov processes $(X, L)$ on $\mathscr{L}\times\R$
where $\mathscr{L} = \{1, 2, \dots, N\}$ is a finite set. For each $j \in \mathscr{L}$
we associate a vector $v_j \in \R$ and define $L_j(t) = Leb(0< s < t : X(s) = j)$ as the
time $X$ has spent at location $j$ until time $t.$ We also define 
\[
L(t) = \sum_{j \in \mathscr{L}}v_jL_j(t)
\] as the accumulated time $X$ spends at each location, weighting the time spent at location $j$ by the factor $v_j$.
The transition rates of $X$ will depend on $L(t).$ More precisely,
we are given RCLL functions
\(
a_{ij} : \R \to \R
\) 
\noindent
where $a_{ij}$ is the rate function for the Poisson process defining the transition of $X$ from $i$ to $j.$ Conditional on $X(t_0) = i, L(t_0) = l,$ the jump rate of $X$ transitioning from $i$ to $j$ is $a_{ij}(l + [t-t_0]v_i)$ with $t \geq t_0.$ 
To construct such a process, for each $i$ we create independent random variables $(T_{i, j})_{j \in \mathscr{L}}$ which represents the jump time from $i$ to $j.$ Since this jump has intensity $a_{ij}(l + [t-t_0]v_i)$ with $t \geq t_0,$ 
\begin{align}\label{eq:CDF_JumpTimes}
\prob(T_{i, j} > t + t_0 | X(t_0) = i, L(t_0) = l) = \exp\left(-\int_0^ta_{ij}(l + sv_i)\, \mathrm{d}s\right),
\end{align}
for all $t > 0.$ Pick $j'$ such that $T_{i, j'} = \min_{j \neq i}T_{i, j}$, and define the first transition of
$X$ after time $t_0$ to be location $j'$ and occur at time $T_{i, j'}.$ \\

These dynamics can be produced from a collection of
independent exponential random variables $(E_{i, j})_{i, j \in \N}$ of rate one. Set $T_0 = 0$ and recursively define
\begin{align}\label{eq:Exponential_Rep_JumpTimes}
T_{i + 1}^j &= \inf\left( t > T_i: \int_0^ta_{X(T_i)j}(L(T_i) + v_{X(T_i)(s - T_i)}) \mathrm{d}s > E_{i, j}\right)\\
T_{i + 1} &= \min_{j}T_{i + 1}^j.
\end{align}
We use the convention $\inf \emptyset = \infty.$ Define
\begin{align}
L(s) &= L(T_i) + v_{X_n(T_i)}(s - T_i), \text{ for $s \in [T_i, T_{i + 1}]$}\\
X(s) &= X(T_i), \text{ for $s \in [T_i, T_{i + 1})$}\\
X(T_{i + 1}) &= \text{argmin}_{j}T_{i + 1}^j.
\end{align}

The pair $(X, L)$ is a strong Markov process with generator
\[
Af(j, l) = v_j \cdot \nabla_lf(j, l) + \sum_{i \neq j}a_{ji}(l)[f(i, l) - f(j, l)], \ j = 1, \dots, N,\ l \in \R.
\]
Burdzy and White assume $(X, L)$ is irreducible in the sense that there is some $\{j_0\} \times U \subset \mathscr{L}\times \R$ such that 
\[
\prob((X(t), L(t)) \in \{j_0\} \times U | X(0) = i, L(0) = l) > 0, \ \text{ for all $(i, l) \in \mathscr{L}\times \R$.}
\]
It should be noted that although they consider $\mathscr{L}$ to be a finite set, their main results hold
 assuming that $\sup_{ij}a_{ij}(l)$ is bounded on compact sets of $l$ and $\sup_{i}|v_i| < \infty$.
We denote $\mc{C}$ as the class of such processes with these conditions, allowing $\mathscr{L} = \N$. 

\section{Discrete Approximation}
\subsection{Definition of Processes}\label{subsection:def_discrete}
The reflected diffusion \eqref{eq:SDE_BMID} describing BMID is a process whose drift depends on the local time of the diffusion at zero. Intuitively, to approximate this diffusion with a Markov process on the lattice $2^{-n}\N$ one would  want the ``velocity'' to depend on the accumulated time spent at zero. This is modeled as
a jump process whose intensity function is stochastic and depends linearly on the accumulated time the process spends at zero. These jump processes need to converge, as $n \to \infty$, to a process whose drift is the appropriate local time. 

In Section \ref{section:BMID} we introduced an equivalent formulation for BMID given by $(U, V)$ in \eqref{ch3:eq:EquivSys}. In this subsection we will describe two equivalent discrete
processes that mirror the equivalence of the continuous processes described earlier; see Proposition \ref{remark:Equiv_jump_processes}. We do this because in order to prove the convergence result described in the introduction we actually
prove the convergence result for the equivalent formulation.

Jump processes whose intensity depends linearly on the accumulated time at zero are described by the class $\mc{C}$ in subsection \ref{Ch3:section:ClassC}.
Consider a process $(X_n, V_n^X)$ on the state space $2^{-n}\N \times \R$ where $v_j = 0$ for all $j \neq 0, v_0 = K2^n$ as given in the notation in that subsection.
(We may hide the dependence on $n$ for convenience.) 
For an initial ``velocity'' $v \in \R$, we define
\[
V_n^X(t) = -v + K2^nL_n(t) = -v + K2^n\cdot Leb(0 < s < t : X_n(s) = 0).
\] The rate functions $a_{ij}: \R \to \R$ are
\begin{align}
\begin{split}\label{ch3:rateFunctions}
a_{i(i + \sign(l)2^{-n})}(l) &= 2^{2n} + 2^n|l| = 2^{2n} + 2^n|V_n^X(t)|, \\
a_{i(i - \sign(l)2^{-n})}(l) &= 2^{2n}, \text{} 
\end{split}
\end{align}
where $l = V_n^X(t),$
except when $i = 0$ where we do not allow a downward transition.
By Lemma \ref{ch3:SumDoublySPP} the jump process $X_n$ can be decomposed into a sum of independent processes, $S_n$ and $Z_n$, whose rate functions sum to that of $X_n.$ The following definition will be used throughout the paper.
\begin{defn}\label{ch3:def:runningMin}
For a process $Q(t)$ we define $M^Q(t)$ as the signed running minimum below 
zero of $Q$. That is, 
\[
M^Q(t) = \sup_{0 < s < t}[-Q(s)] \lor 0.
\]
\end{defn}
\begin{defn}\label{def:SZV}
Consider the processes $(S_n, Z_n, V_n)$ on $\left(2^{-n}\Z\right)^2 \times \R$ where 
\begin{enumerate}[label = (\roman*)]
\item $S_n$ is a continuous time simple random walk on $2^{-n}\Z$ with positive (and negative) jumps of size $2^{-n}$ and rate $2^{2n}.$
\item $Z_n$ is a point process with jump size $2^{-n}$ and with positive (resp. negative) jumps having stochastic and adapted rate $2^n|V_n|$ when $V_n > 0$ (resp. $V_n < 0$).
\item We have
\begin{align*}
&V_n(t) = -v + K2^{n}\cdot Leb(0 < s < t : U_n(s) = -M_n(s)),\\
&U_n = S_n + Z_n,\\
&M_n(t) := M^{U_n}(t).
\end{align*}
\end{enumerate} 
That is, $S_n$ and $Z_n$ are point processes with adapted intensity functions as discussed in \cite[Chapter 2]{Bremaud}.
\end{defn}

Note the similarity to the equivalent formulation of BMID given by $(U, V)$ in \eqref{ch3:eq:EquivSys} to $(U_n, V_n)$ given above. Existence of $(S_n, Z_n, V_n)$ follows from the fact that it is of class $\mc{C}$, or, equivalently, one can construct the processes via the dynamics given in \eqref{eq:Exponential_Rep_JumpTimes} by using the intensity functions \eqref{ch3:rateFunctions}.
 
\begin{prop}\label{remark:Equiv_jump_processes}
The processes  $(U_n + M_n, V_n)$ and $(X_n, V_n^X)$ have the same law.
\end{prop}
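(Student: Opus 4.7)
The plan is to verify that $(U_n + M_n, V_n)$ evolves according to the exact Markovian jump dynamics prescribed for $(X_n, V_n^X)$ in the construction \eqref{eq:Exponential_Rep_JumpTimes} with rate functions \eqref{ch3:rateFunctions}, and then conclude by uniqueness in law of that construction.

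First, I would analyze $M_n$ pathwise. Since $U_n = S_n + Z_n$ has only jumps of size $\pm 2^{-n}$, and $M_n(t) = \sup_{0 < s < t}[-U_n(s)] \vee 0$ is a running supremum, $M_n$ is piecewise constant between jump times of $U_n$, and jumps upward by exactly $2^{-n}$ precisely at times when $U_n$ makes a downward jump from a state where $U_n = -M_n$, i.e.\ where the reflected process $U_n + M_n$ equals $0$. This yields the discrete Skorohod picture: $U_n + M_n$ takes values in $2^{-n}\mathbb{N}_{0}$; every up-jump of $U_n$ produces an up-jump of $U_n + M_n$; a down-jump of $U_n$ from a state with $U_n + M_n > 0$ produces a down-jump of $U_n + M_n$; and a down-jump of $U_n$ from $U_n + M_n = 0$ is exactly cancelled by the simultaneous $+2^{-n}$ jump of $M_n$, so that $U_n + M_n$ remains at $0$.

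Next, I would read off intensities. By Lemma \ref{ch3:SumDoublySPP} and Definition \ref{def:SZV}, the up-jump intensity of $U_n$ is $2^{2n} + 2^n V_n \mathbf{1}\{V_n > 0\}$ and the down-jump intensity is $2^{2n} + 2^n |V_n|\mathbf{1}\{V_n < 0\}$. Writing $l = V_n(t)$ and combining with the previous paragraph, at a state $j > 0$ the process $U_n + M_n$ has up/down-jump intensities which match $a_{j,j \pm 2^{-n}}(l)$ from \eqref{ch3:rateFunctions}; at $j = 0$ only the up-rate of $U_n$ survives, which coincides with the ``no downward transition below zero'' modification defining $X_n$. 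Moreover, $\{s : U_n(s) = -M_n(s)\} = \{s : U_n(s) + M_n(s) = 0\}$, so $V_n(t) = -v + K 2^n \cdot \mathrm{Leb}(0 < s < t : [U_n + M_n](s) = 0)$, which is precisely the formula for $V_n^X(t)$ with $U_n + M_n$ playing the role of $X_n$. Both processes are therefore produced by the same exponential construction \eqref{eq:Exponential_Rep_JumpTimes} driven by the rate functions \eqref{ch3:rateFunctions} and the same initial velocity $-v$, and hence have identical law.

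The one delicate point is the discrete reflection identity in the first step, namely that at a downward jump of $U_n$ from the set $\{U_n = -M_n\}$ the running supremum $M_n$ increases by exactly $2^{-n}$---no more, no less. This is immediate from the definition of $M_n$ as a running supremum together with the fact that $U_n$'s jumps have size $2^{-n}$; everything else in the argument is bookkeeping of intensities via the composition and splitting lemmas (Lemmas \ref{ch3:SumDoublySPP} and \ref{ch3:SPPstochDom}) for point processes with stochastic intensity.
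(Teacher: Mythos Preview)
Your proposal is correct and takes essentially the same approach as the paper's proof: verify that $(U_n + M_n, V_n)$ is a process of class $\mathcal{C}$ satisfying the rate functions \eqref{ch3:rateFunctions}, identify $V_n$ with $V_n^X$ via the set identity $\{U_n = -M_n\} = \{U_n + M_n = 0\}$, and conclude by uniqueness in law. Your treatment of the discrete Skorohod reflection is more explicit than the paper's terse argument, but the substance is the same (and note that Lemma \ref{ch3:SPPstochDom} is not actually needed here, only Lemma \ref{ch3:SumDoublySPP}).
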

\begin{proof}  With these definitions $(U_n + M_n, V_n)$ has the same law as $(X_n, V_n^X)$ because it is of class $\mc{C}$ and satisfies \eqref{ch3:rateFunctions}. To see this, note that $V_n$ is adapted to the right continuous filtration $\mc{F}_t$ generated by the pair $(S_n, Z_n).$ Also note that $U_n + M_n = S_n + Z_n + M_n$ is a nonnegative process on $2^{-n}\N$. By Lemma \ref{ch3:SumDoublySPP}, $S_n + Z_n$ has a jump rate function of $2^{2n} + 2^nV_n(t)$ where 
\begin{align*}
V_n(t) &= -v 
+ K2^{n}\cdot Leb\{0 < s < t : U_n(s) = -M_n(s)\}\\
&= -v +  K2^{n}\cdot Leb\{0 < s < t : U_n(s) + M_n(s) = 0\}.
\end{align*}
Consequently, $V_n^X = V_n$ if we define $X_n := U_n + M_n.$ Therefore
$(U_n + M_n, V_n)$ is one realization of the process $(X_n, V_n^X)$ given by \eqref{ch3:rateFunctions}.
\end{proof}

\noindent
We will work with $(S_n, Z_n, V_n)$ as an equivalent formulation of $(X_n, V_n^X)$ defined by \eqref{ch3:rateFunctions}.
\subsection{Theorem Statement}
The main result of this article is that $(S_n, V_n, Z_n, U_n)$ converges in an appropriate sense to $(B, V, \int_0^\cdot V, U).$
\begin{theorem}\label{ch3:thmDiscreteConvergence}
For $K \geq 0$ and $v \in \R$, let $(S_n, Z_n, V_n, U_n)$ be given as in Definition \ref{def:SZV} in subsection \ref{subsection:def_discrete}.
Then 
\[
(S_n, V_n, Z_n, U_n) \overset{d}{\lra} (B, V, \int_0^\cdot V, U),
\]
in the Skorohod topology on $D([0, T], \R^4)$, where $(B, V, \int_0^\cdot V, U)$ is a quadruple of continuous processes adapted to the Brownian filtration of the first coordinate $B$ with the following holding for all $t \in [0, T]$, almost surely:
\begin{align*}
\begin{split}
U(t) &= B(t) + \int_0^tV(x)\, \mathrm{d}x, \\
V(t) &= -v + KM^U(t), \\ 
\end{split}
\end{align*}
and where $M^U$ is the running minimum given in Definition \ref{ch3:def:runningMin}.
\end{theorem}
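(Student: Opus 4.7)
The plan is to follow the standard tightness-plus-identification-plus-uniqueness recipe, leveraging White's Theorem \ref{theorem:Whites_smap} for uniqueness of the limiting system. Concretely, I would show that $(S_n, V_n, Z_n, U_n)$ is tight in $D([0,T],\R^4)$ and that every subsequential weak limit satisfies the system $U = B + \int_0^\cdot V$, $V = -v + KM^U$; uniqueness of White's Skorohod map, applied pathwise to $B$, then identifies the limit and forces the full sequence to converge.

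For tightness, I first introduce the stopping time $\tau_n^R := \inf\{t : \sup_{s \leq t}|U_n(s)| \vee V_n(t) > R\}$ and establish $\prob(\tau_n^R \leq T) \to 0$ as $R \to \infty$, uniformly in $n$. This uses the elementary bound $0 \leq V_n(t) + v \leq KM_n(t) \leq K\sup_{s \leq t}|U_n(s)|$ together with the martingale decomposition $Z_n(t) = \mc{M}_n(t) + \int_0^t V_n(s)\,\md s$, whose martingale part has quadratic variation at most $2^{-n}\int_0^t|V_n(s)|\,\md s$; iterating Grönwall-style across small intervals of length less than $1/K$, Donsker's bound on $\sup|S_n|$, and Doob's inequality giving $\sup_{t \leq T\wedge\tau_n^R}|\mc{M}_n(t)| \to 0$ in probability, together yield the required a priori moment bound on $\sup|U_n|$. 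Tightness of the quadruple then follows immediately: $S_n$ is tight by Donsker, $V_n$ is non-decreasing and uniformly bounded on $[0,T\wedge\tau_n^R]$ so tight, $Z_n$ is the sum of a vanishing martingale and a uniformly Lipschitz process, and $U_n = S_n + Z_n$ inherits tightness.

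For identification, I pass to a weakly convergent subsequence and apply Skorohod's representation to realize almost-sure convergence in $D([0,T])$. Any limit $(\wt S, \wt V, \wt Z, \wt U)$ satisfies $\wt S = B$ (standard Brownian motion), $\wt Z = \int_0^\cdot \wt V$, and $\wt U = B + \int_0^\cdot \wt V$, all of which follow from the martingale vanishing and continuity of the linear integral map. The crux is the identification $\wt V = -v + KM^{\wt U}$. By definition $V_n(t) + v = K\cdot 2^n\cdot\mathrm{Leb}\{0 < s < t : U_n(s) = -M_n(s)\}$, namely $K$ times the scaled occupation at zero of the nonnegative reflected process $Y_n := U_n + M_n$. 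Thus the task reduces to showing that this scaled occupation converges to $M^{\wt U}$, which by Tanaka's formula (Remark \ref{remark:BMID}) equals the local time at zero of $\wt Y := \wt U + M^{\wt U}$.

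This last local-time convergence is the main obstacle. The reflected process $Y_n$ carries a stochastic drift $V_n$ that is itself defined in terms of the occupation quantity whose limit is being identified, so classical local-time invariance principles for random walks with deterministic or bounded-Lipschitz drift do not apply off the shelf. I expect the resolution to exploit the a priori $L^\infty$ control on $V_n$ from the first step to couple $Y_n$, on short time intervals of vanishing length, with reflected random walks of frozen drift (for which local-time convergence to reflected Brownian local time is standard), and then to interpolate via the Skorohod-map continuity of $f \mapsto (f + M^f, M^f)$ guaranteed by Lemma \ref{SkorohodLemma}. Once the identification $\wt V = -v + KM^{\wt U}$ is in hand, Theorem \ref{theorem:Whites_smap} applied pathwise with $f = B$ forces every subsequential limit to agree with the process built by White's Skorohod map, yielding convergence of the entire sequence.
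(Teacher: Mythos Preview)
Your high-level architecture---tightness, identification of subsequential limits, then uniqueness via White's Skorohod map---matches the paper exactly, and your use of Theorem~\ref{theorem:Whites_smap} at the end is identical to what the paper does. Two of your technical ingredients are genuinely different from, and arguably cleaner than, the paper's: your tightness argument via the martingale decomposition $Z_n = \mathcal{M}_n + \int_0^\cdot V_n$ with $\langle \mathcal{M}_n\rangle_t \le 2^{-n}\int_0^t|V_n|$ and a Gr\"onwall bound replaces the paper's more hands-on route (the paper couples $Z_n$ below by a constant-rate process $Z_n'$, shows the auxiliary occupation $L_n'$ converges explicitly to $M^{B^{-v}}$, and bootstraps $C$-tightness of $L_n$ and then $Z_n$ from that). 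Likewise your identification $\widetilde Z = \int_0^\cdot \widetilde V$ drops out immediately from the vanishing martingale, whereas the paper proves it by a separate Riemann-sum argument using the random partition given by the jump times of $Z_n$.

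The real content of the paper, however, is precisely the step you flag as ``the main obstacle'' and leave unresolved: showing that the scaled occupation $L_n(t) = 2^n\,\mathrm{Leb}\{s\le t: U_n(s)=-M_n(s)\}$ converges to $M^{\widetilde U}$. Your proposed route---freeze the drift on short intervals, invoke a local-time invariance principle for reflected walks with constant drift, and patch via continuity of the Skorohod map---is not a proof, and it is not obvious it can be made into one without essentially redoing the paper's work: the difficulty is that the drift $V_n$ is itself a functional of the very occupation quantity you are trying to control, so a freezing error on one subinterval propagates into the drift on the next. The paper avoids this circularity entirely by a direct microscopic analysis: it localises $V_n$ below $C$, then for each level $m=i2^{-n_k}$ of the running minimum it decomposes the time $T_i$ spent there as a geometric sum of minima of exponentials, sandwiches the unknown geometric parameter (which depends on the current value of $V_n$) between $1/2$ and $(1+C2^{-n_k})/(2+C2^{-n_k})$ via explicit couplings of the jump clocks, and shows that both resulting bounds $\sum_i 2^{n_k}\widetilde\Phi^i_{n_k}$ and $\sum_i 2^{n_k}\Phi^i_{n_k}$ concentrate on $M^{U^C}(s)$ by an elementary variance computation. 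This exponential-clock bookkeeping is the heart of the argument, and your proposal does not supply a substitute for it.
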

\noindent
Theorem \ref{ch3:thmDiscreteConvergence} has the following corollary.
\begin{cor}\label{th:cor_discrete_conv}
Let $(X_n, V_n^X)$ be the process defined by \eqref{ch3:rateFunctions} in subsection \ref{subsection:def_discrete}. Then
\[
(X_n, V_n^X) \overset{d}{\lra} (X, V),
\]
in the Skorohod topology on $D([0, T], \R)$.
Here $(X, V)$ is BMID together with its velocity. That is,
\begin{align}\label{eq:SDE_BMID_Vel}
dX(t) = dB(t) + dL(t) + V(t)dt, \ dV(t) = KL(t)dt, V(0) = -v,
\end{align} 
where $L$ is the local time of $X$ at zero.
\end{cor}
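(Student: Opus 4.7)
The plan is to deduce Corollary \ref{th:cor_discrete_conv} from Theorem \ref{ch3:thmDiscreteConvergence} in three steps: reduce $(X_n, V_n^X)$ to the equivalent pair $(U_n + M_n, V_n)$ via Proposition \ref{remark:Equiv_jump_processes}, pass the running minimum through the Skorohod limit via the continuous mapping theorem, and then identify the resulting limit with BMID and its velocity using Skorohod's lemma.

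By Proposition \ref{remark:Equiv_jump_processes}, the processes $(X_n, V_n^X)$ and $(U_n + M_n, V_n)$ agree in law as $D([0,T], \R^2)$-valued random variables, so it suffices to prove convergence of the latter. Theorem \ref{ch3:thmDiscreteConvergence} supplies
\[
(S_n, V_n, Z_n, U_n) \overset{d}{\lra} \bigl(B, V, \textstyle\int_0^\cdot V(s)\,\md s, U\bigr)
\]
in $D([0,T], \R^4)$, with continuous limit. After passing to the Skorohod representation, $U_n \to U$ and $V_n \to V$ locally uniformly almost surely. The running-minimum functional $\Phi(x)(t) := \sup_{0<s<t}[-x(s)] \lor 0$ is $1$-Lipschitz with respect to the sup-norm, so $M_n = \Phi(U_n) \to M^U = \Phi(U)$ locally uniformly; combined with $V_n \to V$ this yields $(U_n + M_n, V_n) \overset{d}{\lra} (U + M^U, V)$ in $D([0,T], \R^2)$.

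It remains to identify $X := U + M^U$ as BMID with velocity $V$. From Theorem \ref{ch3:thmDiscreteConvergence}, $U = B + \int_0^\cdot V(s)\,\md s$ and $V = -v + KM^U$. Skorohod's Lemma \ref{SkorohodLemma} shows $M^U$ is the unique continuous nondecreasing function with $M^U(0) = 0$ such that $X \geq 0$ and $M^U$ is flat off $\{s : X(s) = 0\}$. By the Ito-Tanaka argument cited in Remark \ref{remark:BMID}, $L := M^U$ is the local time of $X$ at zero, and substituting gives
\[
\md X = \md B + V(t)\,\md t + \md L, \qquad V(0) = -v,
\]
together with $V = -v + KL$, matching the system \eqref{eq:SDE_BMID_Vel}. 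The main content of the statement sits in Theorem \ref{ch3:thmDiscreteConvergence} itself; the corollary requires only the (standard) continuity of the running-minimum map at continuous paths and the pathwise identification of the limit via Skorohod's lemma, so no new probabilistic estimates are needed beyond those embedded in the theorem.
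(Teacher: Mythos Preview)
Your proof is correct and follows essentially the same route as the paper: invoke Proposition \ref{remark:Equiv_jump_processes} to replace $(X_n,V_n^X)$ by $(U_n+M_n,V_n)$, apply Theorem \ref{ch3:thmDiscreteConvergence}, and then identify the limit with BMID via Remark \ref{remark:BMID_equiv_gap}. You are simply more explicit than the paper about the continuous-mapping step for the running minimum and about the Skorohod/It\^o--Tanaka identification of $M^U$ with the local time, which the paper compresses into a reference to the remark.
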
\label{th:discrete_conv}
\begin{proof}[Proof of Corollary]
By Proposition \ref{remark:Equiv_jump_processes} we set $X_n := U_n + M_n$ and $V_n^X := V_n.$ Now
Theorem \ref{ch3:thmDiscreteConvergence} implies
\[
(X_n, V_n^X) \overset{d}{\lra} (U, -v + KM^U),
\]
and where $(U, -v + KM^U)$ solves \eqref{eq:SDE_BMID_Vel} as mentioned in Remark \ref{remark:BMID_equiv_gap}.
\end{proof}
\begin{remark}\label{ch3:remark:convInUnif}
Let $D([0, T], \R)$ denote the space of RCLL paths equipped with the Skorohod metric $d$ \cite[Chapter 3 Section 5]{EthierKurtz}. If a process $W_n$ with paths in $D([0, T], \R)$ converges weakly to $W$, then according to the Skorohod representation, \cite[Theorem 3.1.8]{EthierKurtz}, we can place $W_n, W$ on the same probability space such that
\[
d(W_n, W) \lra 0,
\]
almost surely. If the limiting process $W$ is continuous almost surely, then
\[
\|W_n - W\|_{[0, T]} := \sup_{0 < s < T}|W_n(s) - W(s)| \lra 0,
\]
almost surely on this probability space, and, in fact, uniform convergence and convergence in the Skorohod metric become equivalent. See Ethier and Kurtz, \cite[Chapter 3 Section 5]{EthierKurtz} and \cite[Chapter 3 Section 10]{EthierKurtz}, and Billingsley \cite[Chapter 3]{Billingsley}.
\end{remark}

\subsection{Proof of Theorem \ref{ch3:thmDiscreteConvergence}}


In this section we prove Theorem \ref{ch3:thmDiscreteConvergence} assuming the two lemmas below, one for tightness and the other for classifying the subsequential limits.

\begin{lemma}[Tightness]
The collection of processes $\{(S_n, Z_n, V_n) : n \in \N\}$ is tight in $D([0, T], \R^2)$
 $\times C[0,T] \subset D([0, T], \R^3).$ Because $U_n = S_n + Z_n$, it follows that $\{(S_n,$ $V_n,$ $Z_n, U_n) : n \in \N\}$ is also tight in $D([0, T], \R) \times C[0, T] \times D([0, T], \R^2)$.
 Furthermore, all limiting processes are continuous.
 \label{ch3:lemma:tightness}
\end{lemma}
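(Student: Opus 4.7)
The plan is to prove tightness of each of the three marginals separately while showing simultaneously that every subsequential limit has continuous paths. Since a sequence of probability measures on a product of Polish spaces is tight iff each marginal sequence is tight, joint tightness in $D([0,T],\R)^2 \times C[0,T]$ then follows. The statement for $(S_n,V_n,Z_n,U_n)$ is automatic once the triple is tight because $U_n = S_n + Z_n$ is a continuous map on the product. The marginal $S_n$ is the easy case: it is a rescaled continuous-time simple symmetric random walk with jump size $2^{-n}$ and rate $2^{2n}$, so Donsker's invariance principle gives tightness in $D([0,T],\R)$ with every subsequential limit a standard Brownian motion, in particular continuous.

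The crux is tightness of $V_n$. Because $V_n + v$ is nondecreasing and continuous with $V_n(0) = -v$ deterministic, tightness in $C[0,T]$ reduces to tightness of $V_n(T)$ in $\R$ together with an equicontinuity-in-probability estimate
\[
\lim_{\delta \downarrow 0} \limsup_{n\to\infty} \prob\!\left(\sup_{|t-s|<\delta,\ s,t\in[0,T]} V_n(t) - V_n(s) > \eta\right) = 0,\qquad \forall \eta>0.
\]
Both bounds will come from a stochastic-domination argument that controls the occupation time $L_n(t) := \mathrm{Leb}(0<s<t : U_n(s)+M_n(s)=0)$ from above by the analogous occupation time of a \emph{reflected} random walk with deterministic drift. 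When $V_n \ge 0$, the $Z_n$-component contributes upward pushes that only decrease $L_n$ compared with the drift-free reflected walk; when $V_n \in [-v,0)$, Lemma \ref{ch3:SPPstochDom} lets us dominate the downward push of $Z_n$ by a Poisson process of deterministic rate $2^n v$. After Brownian scaling $2^n L_n$ is thus dominated by the scaled occupation-time at zero of a reflecting random walk with drift $-v$, which converges to the local time at zero of a reflecting Brownian motion with drift $-v$. The latter has an a.s.\ H\"older modulus at zero, which transfers through the domination to yield both (i) and (ii).

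Given tightness of $V_n$, tightness of $Z_n$ follows from a Doob-Meyer decomposition. By the definition of stochastic intensity,
\[
M^Z_n(t) := Z_n(t) - \int_0^t V_n(s)\,\md s
\]
is an $\F_t$-martingale with predictable quadratic variation $\int_0^t 2^{-n}|V_n(s)|\,\md s$. Since $V_n$ is tight in $C[0,T]$, the $L^2$ maximal inequality gives $\sup_{t\le T}|M^Z_n(t)| \to 0$ in probability. The compensator $\int_0^\cdot V_n(s)\,\md s$ is Lipschitz with random Lipschitz constant $\sup_{s\le T}|V_n(s)|$, which is tight; hence it is tight in $C[0,T]$, and so is $Z_n$. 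Continuity of the subsequential limits of $S_n$ and $Z_n$ also follows directly from the observation that their maximal jump sizes are $2^{-n}\to 0$.

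The main obstacle is the coupling step: the intensity of $Z_n$ is driven by $V_n$, while $V_n$ is driven by the joint occupation time of $U_n+M_n$ at zero, so naive comparison is circular. The standard device to break the circularity is a stopping-time argument, e.g.\ stop $V_n$ the first time it exceeds a level $M$, apply the domination to the stopped process (where the rate of $Z_n$ is bounded by $2^n M$), and then let $M\to\infty$ using the uniform bound on $V_n(T)$ obtained from the domination. Everything else (Lemmas \ref{ch3:SumDoublySPP}, \ref{ch3:SPPstochDom}, Donsker) is off-the-shelf once this decoupling is in place.
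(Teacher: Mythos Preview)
Your overall strategy is correct and, for $S_n$ and $V_n$, coincides with the paper's: Donsker for $S_n$, and for $V_n$ the same stochastic domination of $L_n$ by the occupation time $L_n'$ of a reflected walk driven by $S_n$ plus a \emph{deterministic}-rate Poisson drift of intensity $|v|2^n$ (the paper's Lemma \ref{ch3:Lemma:LandZ'boundsOnExtendedSpace}). Your worry about circularity in the last paragraph is misplaced: the only input needed for the coupling is $V_n\ge -v$, which is immediate from $L_n\ge 0$ and requires no stopping-time bootstrap. Where you are thin is the assertion that the scaled occupation time $L_n'$ ``converges to the local time at zero of a reflecting Brownian motion with drift $-v$.'' The paper does not take this for granted; it proves it (Lemma \ref{lemma:L_nBound}) by an excursion decomposition --- writing $L_n'$ as a sum over levels of the running minimum of geometric-many exponential sojourns, identifying each level-sojourn as $\Exp(2^{2n})$, and then showing the resulting random sum tracks $M_n'$ via Kolmogorov's maximal inequality. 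That argument also yields the moment bound $\sup_n\ex L_n'(T)<\infty$ you implicitly need.

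Your treatment of $Z_n$ is genuinely different from the paper's and is cleaner. The paper localizes by stopping $V_n$ at level $C$, dominates the stopped $Z_n$ pathwise by a Poisson process of rate $C2^n$, invokes the functional LLN for that Poisson process, and controls $\prob(T_C^{(n)}<T)$ via the moment bound on $L_n(T)$. Your compensator argument is more direct: once $V_n$ is $C$-tight, $\int_0^{\cdot}V_n$ is automatically $C$-tight (Lipschitz with tight constant), and the martingale $Z_n-\int_0^{\cdot}V_n$ has predictable bracket $2^{-n}\int_0^{\cdot}|V_n|$, so a localized Doob inequality kills it. This route also makes (iv) of Lemma \ref{lemma:sub} nearly free. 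One technical caveat: addition is \emph{not} continuous on $D([0,T],\R)\times D([0,T],\R)$ in general, so ``$U_n=S_n+Z_n$ is a continuous map on the product'' is false as stated; you need that the marginals are $C$-tight (hence limits continuous), which is exactly what the paper notes in Remark \ref{remark:Skorohod_sum}.
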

\noindent
We prove Lemma \ref{ch3:lemma:tightness}
in Section \ref{sec:tightness}. Assuming Lemma \ref{ch3:lemma:tightness} holds, it remains to show there is a unique limit.

\begin{lemma}[Classification of Limits]\label{lemma:sub}
Consider a subsequence $n_k$ with processes $(S_{n_k},$ $Z_{n_k},$ $V_{n_k},$ $U_{n_k})$
converging to $(S, Z, V, U)$ in $D([0, T], \R^4)$ with the Skorohod topology.
Then $(S, Z, V, U)$ is continuous and satisfies the equivalent formulation for
BMID given in \eqref{ch3:eq:EquivSys}. That is,
\begin{enumerate}[label = (\roman*)]
\item $U(t) = S(t) + Z(t),$
\item $S(t)$ is a Brownian motion,
\item $V(t) = KL(t) - v,$ where $L(t) = \max_{0< s < t}[-U(s)] \lor 0,$
\item $\ds Z(t) = \int\limits_0^tV(s)\, \mathrm{d}s.$
\end{enumerate} 
\end{lemma}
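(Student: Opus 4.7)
The plan is to invoke the Skorohod representation theorem together with Lemma \ref{ch3:lemma:tightness} (and Remark \ref{ch3:remark:convInUnif}) and assume that along the subsequence one has $(S_{n_k},Z_{n_k},V_{n_k},U_{n_k}) \to (S,Z,V,U)$ uniformly on $[0,T]$ almost surely; continuity of the limits, guaranteed by Lemma \ref{ch3:lemma:tightness}, upgrades Skorohod convergence to uniform convergence on compacts. Claim (i) is then immediate, since $U_{n_k} = S_{n_k} + Z_{n_k}$ passes to the limit. For (ii), the process $S_n = 2^{-n}(P_n^{+} - P_n^{-})$ with $P_n^{\pm}$ independent Poisson processes of rate $2^{2n}$ is a standard centered random walk, and Donsker's invariance principle (equivalently, the martingale CLT applied to the compensated jumps) identifies the limit $S$ as a Brownian motion.

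For (iv), I would use a compensator decomposition of $Z_n$. Since the positive (resp.\ negative) jump rate of $Z_n$ is $2^n V_n^{+}$ (resp.\ $2^n V_n^{-}$) and the jump size is $2^{-n}$,
\[
\mathcal{Z}_n(t) := Z_n(t) - \int_0^t V_n(s)\,\md s
\]
is a local martingale with predictable quadratic variation $\int_0^t 2^{-n}|V_n(s)|\,\md s$. Tightness bounds $\sup_{[0,T]}|V_n|$ in probability, so this quadratic variation tends to zero, and Doob's $L^2$ inequality gives $\mathcal{Z}_n \to 0$ uniformly on $[0,T]$ in probability. Combined with $V_{n_k} \to V$ uniformly, this yields $Z = \int_0^\cdot V\,\md s$.

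The main obstacle is (iii). Set $A_n(t) := 2^n\,\mathrm{Leb}\{0<s<t : U_n(s) = -M_n(s)\}$, so that $V_n = -v + KA_n$ by definition. Since the running-minimum functional is continuous on $C([0,T])$, uniform convergence $U_{n_k} \to U$ gives $M_{n_k} \to M^U$ uniformly, and it suffices to prove $M_n - A_n \to 0$ in probability. The key observation is that $M_n$ jumps up by $2^{-n}$ exactly when $U_n$ attains a new minimum, which requires $U_n(s^-) = -M_n(s^-)$ together with a downward jump of $U_n$; the downward jump rate of $U_n$ at such times equals $2^{2n} + 2^n|V_n(s)|\mathbf{1}(V_n(s)<0)$. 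Subtracting the predictable compensator of $M_n$ yields the decomposition
\[
M_n(t) = A_n(t) + R_n(t) + \mathcal{M}_n(t),
\]
where $R_n(t) := \int_0^t \mathbf{1}(U_n(s) = -M_n(s))\,|V_n(s)|\,\mathbf{1}(V_n(s)<0)\,\md s \le 2^{-n}\sup_{[0,T]}|V_n|\cdot A_n(T)$, and $\mathcal{M}_n$ is a martingale whose quadratic variation is bounded by $2^{-n}M_n(T)$. Both error terms vanish in probability once one knows that $V_n$, $A_n$, and $M_n$ are bounded uniformly on $[0,T]$, which follows from tightness.

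The hard part is precisely this identification of the ``discrete local time at the minimum'' $A_n$ with the running minimum $M^U$, a discrete analog of L\'evy's identity for reflected Brownian motion. Once one recognises that the rate at which $M_n$ grows is effectively $2^n\mathbf{1}(U_n = -M_n)$, the macroscopic relation $M_n \approx A_n$ drops out of the compensator argument above, and the identification $V = -v + KM^U$ follows by passing to the limit in $V_n = -v + KA_n$.
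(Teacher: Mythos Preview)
Your argument is correct, and for (iii) and (iv) it is genuinely different from the paper's.

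For (iv), the paper argues by hand: it uses the arrival times $u_k$ of $Z_n$ as a random partition and exploits the exponential representation \eqref{eq:Exponential_Rep_JumpTimes} to squeeze $\sum \mu_k$ between upper and lower Riemann sums for $\int V$, then invokes the law of large numbers. Your compensator approach is slicker: writing $Z_n-\int_0^\cdot V_n$ as a martingale with predictable quadratic variation $2^{-n}\int|V_n|$ and letting this vanish is the natural semimartingale proof. The paper's route avoids any appeal to stochastic-intensity martingale theory beyond what is quoted from Br\'emaud, at the price of a longer argument.

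For (iii), the contrast is sharper. The paper proves $L_n\to M^U$ by an excursion-theoretic decomposition inspired by L\'evy's theorem: at each level $-i2^{-n}$ of the running minimum it counts visits (squeezed between two geometric variables because the up-jump rate of $U_n$ is only known to lie in $[2^{2n},2^{2n}+C2^n]$), multiplies by the sojourn time per visit (squeezed between two exponentials), and shows the resulting geometric sums of exponentials are themselves exponential with rate $\approx 2^{2n}$, so that $L_n\approx\sum_{i\le 2^nM_n}\mathrm{Exp}(2^n)\to M^U$. A localization $L_n\le C$ is needed throughout to control the rates. Your approach bypasses all of this by compensating $M_n$ directly: the key identity is that the predictable compensator of $2^n M_n$ is exactly the occupation time at the minimum times the downward rate $2^{2n}+2^n V_n^-$, so $M_n-A_n$ differs from a vanishing martingale by the error $R_n\le 2^{-n}\|V_n\|_{[0,T]}A_n(T)$. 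This is considerably shorter and more robust; the paper's argument, on the other hand, makes the connection to L\'evy's identity explicit and gives finer distributional information about the sojourn times $T_i$ (each is $\mathrm{Exp}(2^{2n})$), which your argument does not need and does not recover.

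One small remark: for the martingale estimates to go through cleanly you should note that $\mathbf{E}[M_n(T)]$ and $\mathbf{E}[A_n(T)]$ are bounded uniformly in $n$ (Lemmas \ref{ch3:momentBoundMn} and \ref{lemma:L_nBound}), so no extra localization is required for Doob's inequality; tightness alone gives bounds in probability, which would still suffice but would need a stopping-time argument.
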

\noindent
Lemma \ref{lemma:sub} is proved in Section \ref{sec:sub}.

\begin{proof}[Proof of Theorem \ref{ch3:thmDiscreteConvergence}]
Since the formulation of BMID described by $(B, V, U)$ in the statement of Theorem \ref{ch3:thmDiscreteConvergence} is unique in law, Lemmas \ref{ch3:lemma:tightness} and \ref{lemma:sub} characterizes the subsequential limits of $(S_n, Z_n, V_n)$. See \cite{white2007} where existence (and uniqueness) of such a system is proved. Consequently, we have convergence of the entire sequence to this equivalent formulation of BMID. 
\end{proof}
\noindent


\subsection{Lemma \ref{ch3:lemma:tightness}: Tightness of $(S_n, Z_n, V_n)$}\label{sec:tightness}
Recall that our process $(S_n, Z_n, V_n)$ is in $D([0, T], \R^3)$, the space of RCLL paths
with the Skorohod topology defined by the product metric $d \times d \times d$ where $d$ is the Skorohod metric, see Billingsley \cite{Billingsley}. The following definition is taken from Jacod and Shiryaev \cite{Shiryaev}.

\begin{remark}\label{remark:Skorohod_sum}
In general, it is not true that if $\alpha_n \to \alpha$, $\beta_n \to \beta$ in the Skorohod topology, then $\alpha_n + \beta_n \to \alpha + \beta.$ However, this does hold if either $\alpha$ or $\beta$ is continuous. See Jacod and Shiryaev \cite[Proposition VI.1.23]{Shiryaev}. Similarly, by Remark \ref{ch3:remark:convInUnif} one can assume a sequence $(S_{n_k}, Z_{n_k}, V_{n_k})$ that converges in distribution on $D([0, T], \R^3)$ in fact converges almost surely to a continuous process, say $(S', Z', V')$ in the uniform metric if the limit is continuous; at least on some probability space. This immediately implies $U_{n_k} = S_{n_k} + Z_{n_k}$ converges almost surely to $S' + Z'.$
\end{remark}

\begin{defn}{\cite[Definition VI.3.25]{Shiryaev}}\label{ch3:}
A sequence of processes $\{X_i : i \in \N\}$ in $D([0, T], \R)$ is said to be $C$-tight
if $\{X_i : i \in \N\}$ is tight on $D([0, T], \R)$ and all limiting processes are continuous.
\end{defn}
\noindent
See Remark \ref{ch3:remark:convInUnif}, which mentions the Skorohod metric when the limit processes are continuous. The proof that $(S_n, Z_n, V_n)$ is tight is broken into
multiple lemmas. Recall that $L_n(t) := 2^{n}Leb(0 < s < t : U_n(s) = -M_n(s))$ where $U_n := S_n + Z_n$, $M_n := M^{U_n},$ and $V_n := -v + KL_n$.

\begin{defn}
For $f \in D([0, T], \R)$, let
\[
\omega(f, \d) := \sup_{\substack{0 < s < t < T \\ |t - s| < \d}}|f(t) - f(s)|
\]
be the modulus of continuity of $f.$
\end{defn}
\noindent
Recall that $\|f\|_{[a, b]} = \sup_{a< x < b}|f(x)|.$

\begin{lemma}{\cite[Proposition VI.3.26]{Shiryaev}}\label{ch3:LemmaCTight}
A sequence of processes $X_n$ in $D([0, T], \R)$ is $C$-tight if and only if for every $\e > 0$,
\begin{enumerate}[label = (\roman*)]
\item $\lim_{C \to \infty}\limsup_{n \to \infty}\prob(\|X_n\|_{[0, T]} \geq C) = 0$,\\
\item $\lim_{\d \to 0}\limsup_{n \to \infty}\prob(\omega(X_n, \d) > \e) = 0$.
\end{enumerate}
\end{lemma}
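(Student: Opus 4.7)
The plan is to prove Lemma~\ref{ch3:LemmaCTight} by combining Prohorov's theorem with the Arzelà--Ascoli characterization of relatively compact subsets of $C([0,T], \R)$ equipped with the uniform topology. The conceptual starting point is that $C$-tightness is exactly tightness in $D([0,T], \R)$ together with the property that every subsequential weak limit is supported on continuous paths. Since the Skorohod topology on $D([0,T], \R)$ agrees with the uniform topology on the subspace $C([0,T], \R)$, and relatively compact subsets of $C([0,T], \R)$ remain relatively compact when viewed inside $D([0,T], \R)$, the problem reduces to building compact subsets of $C([0,T], \R)$ that carry nearly all of the mass of each $X_n$.

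For the sufficiency direction, I would construct explicit compact sets via Arzelà--Ascoli. Given $\e > 0$, use condition (i) to choose $C$ with $\sup_n \prob(\|X_n\|_{[0,T]} > C) < \e/2$, and use condition (ii) to choose a sequence $\d_k \downarrow 0$ such that $\sup_n \prob(\omega(X_n, \d_k) > 1/k) < \e/2^{k+1}$. The set
\[
K_\e := \bigl\{ f \in C([0,T], \R) : \|f\|_{[0,T]} \leq C,\ \omega(f, \d_k) \leq 1/k \text{ for all } k \bigr\}
\]
is relatively compact in $C([0,T], \R)$ by Arzelà--Ascoli, and hence relatively compact in $D([0,T], \R)$. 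By a union bound $\prob(X_n \in K_\e) > 1 - \e$ for all sufficiently large $n$, and any finitely many initial indices are handled separately. Prohorov's theorem then yields tightness, and since $\overline{K_\e} \subset C([0,T], \R)$, the Portmanteau theorem forces every weak limit law to be supported on continuous paths.

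For the necessity direction, $C$-tightness gives, by Prohorov, compact sets $K_\e \subset D([0,T], \R)$ with $\prob(X_n \in K_\e) > 1 - \e$ uniformly in $n$. Using that all limit points live on continuous paths, one can shrink $K_\e$ so that $K_\e \subset C([0,T], \R)$ up to negligible mass. Arzelà--Ascoli applied to this smaller $K_\e$ yields a uniform bound $\|f\|_{[0,T]} \leq C_\e$ on $K_\e$, which gives (i), and a uniform equicontinuity bound $\sup_{f \in K_\e}\omega(f, \d) \to 0$ as $\d \to 0$, which gives (ii).

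The main obstacle I anticipate is the careful topological bookkeeping between the Skorohod modulus $\omega'(f, \d)$ that characterizes compactness in $D([0,T], \R)$ (via Billingsley's criterion) and the uniform modulus $\omega(f, \d)$ that characterizes compactness in $C([0,T], \R)$. One must justify rigorously that, precisely because the limiting laws are continuous, the Skorohod-based compact sets produced by tightness can be replaced by uniform-modulus-based compact sets lying in $C([0,T], \R)$, and that the two moduli coincide on continuous functions so that control of one transfers to control of the other. This identification of $C$-tightness with Arzelà--Ascoli-style control is the technical heart of the argument and is exactly where Jacod and Shiryaev's proof concentrates its work.
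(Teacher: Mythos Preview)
The paper does not prove this lemma; it merely cites it as \cite[Proposition VI.3.26]{Shiryaev} and moves on. So there is no ``paper's own proof'' to compare against, and your proposal is effectively a proof of a quoted result.

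That said, your sufficiency direction is the standard Arzel\`a--Ascoli/Prohorov argument and is fine. Your necessity sketch, however, has a genuine gap in the step ``one can shrink $K_\e$ so that $K_\e \subset C([0,T],\R)$ up to negligible mass.'' The processes $X_n$ live in $D([0,T],\R)$ and typically have jumps, so $\prob(X_n \in C([0,T],\R))$ may well be zero for every $n$; intersecting $K_\e$ with $C([0,T],\R)$ can therefore lose all the mass, not a negligible amount. What you actually need is not that $X_n$ is nearly continuous, but that the \emph{uniform} modulus $\omega(X_n,\d)$ is small in probability. The correct route is: tightness in $D$ controls the Skorohod modulus $\omega'(X_n,\d)$; continuity of all limit laws forces the maximal jump $J(X_n) = \sup_t |\Delta X_n(t)|$ to converge to $0$ in probability; and then one invokes an inequality of the form $\omega(f,\d) \le 2\,\omega'(f,\d) + J(f)$ (see, e.g., \cite[VI.1--VI.3]{Shiryaev}) to pass from Skorohod-modulus control to uniform-modulus control. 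You allude to exactly this interplay in your final paragraph, so you have identified the right ingredient---but the earlier sketch, as written, is not a valid reduction and should be replaced by the $\omega'$-plus-$J$ argument rather than by ``shrinking $K_\e$ into $C$.''
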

\begin{remark}\label{ch3:remark:Ctight}
Notice that (i) follows from (ii) and $\lim_{C \to \infty}\limsup_{n \to \infty}\prob(|X_n(0)| > C) = 0.$ 
To see this, take $\d = 1$ and $\e = C/2$ so that by definition of the modulus of continuity
\[
\|X_n\|_{[0, T]} \leq |X_n(0)| + \sum_{i = 1}^T\omega(X_n, 1).
\]
Consequently, the triangle inequality gives
\begin{align*}
\prob(\|X_n\|_{[0, T]} > C) &\leq \prob(|X_n(0)| + \lceil T\rceil\omega(X_n, 1) > C)\\
&\leq \prob(|X_n(0)| > C/2) + \prob(\omega(X_n, 1) > C/(2\lceil T \rceil)),
\end{align*}
where $\lceil r\rceil$ is the smallest integer larger than $r,$
from which it is clear that (ii) and $\lim_{C \to \infty}\limsup_{n \to \infty}\prob(|X_n(0)| > C) = 0$ imply (i).
\end{remark}

\begin{lemma} Assume that the sequences of processes $(X_n), (X'_n), (X''_n)$ in $D([0, T], \R)$ satisfy
$$
X'_n(t) - X'_n(s) \le X_n(t) - X_n(s) \le X''_n(t) - X''_n(s),\ \ 0 \le s \le t,
$$
almost surely.
If both $(X_n')$ and $(X_n'')$ are $C$-tight, then $(X_n)$ is also $C$-tight.
\label{lemma:criterion}
\end{lemma}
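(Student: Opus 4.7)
The plan is to apply the $C$-tightness criterion from Lemma~\ref{ch3:LemmaCTight}, which via Remark~\ref{ch3:remark:Ctight} reduces to checking the modulus-of-continuity condition~(ii) together with tightness of the initial values $\{X_n(0)\}$. The engine will be the elementary observation that if a real number $c$ lies in an interval $[a,b]$ then $|c| \le |a| + |b|$, which converts a sandwich of increments into a sandwich of absolute values and hence of moduli of continuity.

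Applied pointwise to the hypothesis, I get
\[
|X_n(t) - X_n(s)| \le |X_n'(t) - X_n'(s)| + |X_n''(t) - X_n''(s)|
\]
for every $0 \le s \le t \le T$. Taking the supremum over pairs with $|t-s| < \d$ gives the deterministic bound
\[
\omega(X_n,\d) \le \omega(X_n',\d) + \omega(X_n'',\d).
\]
A union bound then yields $\prob(\omega(X_n,\d) > \e) \le \prob(\omega(X_n',\d) > \e/2) + \prob(\omega(X_n'',\d) > \e/2)$, and letting $n \to \infty$ followed by $\d \to 0$ and using the $C$-tightness of $(X_n')$ and $(X_n'')$ through condition~(ii) of Lemma~\ref{ch3:LemmaCTight} makes both terms vanish. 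This supplies condition~(ii) for $(X_n)$.

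For condition~(i), I would invoke Remark~\ref{ch3:remark:Ctight}: combined with (ii) it suffices to know that $\{X_n(0)\}$ is tight. Specializing the sandwich to $s = 0$ and applying the same interval-trapping inequality produces
\[
\|X_n\|_{[0,T]} \le |X_n(0)| + 2\|X_n'\|_{[0,T]} + 2\|X_n''\|_{[0,T]},
\]
so tightness of $X_n(0)$ together with condition (i) for $(X_n'), (X_n'')$ delivers (i) for $(X_n)$. In the sandwich's intended use the initial values of the sandwiching processes from Definition~\ref{def:SZV} are deterministic (in fact zero), so this piece is automatic. There is no genuine obstacle here; the whole argument is the one-line pathwise bound on the modulus of continuity together with a routine union bound, and the only subtlety worth flagging is that the hypothesis itself says nothing about $X_n(0)$, which has to be handled separately (trivially in the applications).
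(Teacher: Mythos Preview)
Your proof is correct and follows the same approach as the paper: both bound $\omega(X_n,\d)$ pathwise by $\omega(X_n',\d) + \omega(X_n'',\d)$ and then apply a union bound to verify condition~(ii) of Lemma~\ref{ch3:LemmaCTight}. Your treatment of condition~(i) via Remark~\ref{ch3:remark:Ctight} is in fact more careful than the paper's, which simply asserts $\prob(\|X_n\|_{[0,T]} > C) \le \prob(\|X_n'\|_{[0,T]} > C/2) + \prob(\|X_n''\|_{[0,T]} > C/2)$ ``by the triangle inequality'' without isolating the role of $X_n(0)$; you are right that the increment hypothesis alone says nothing about the initial values, and that this is harmless only because in every application the processes start at zero.
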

\begin{proof}
 For any $C > 0$, the triangle inequality gives
\[
\prob(\|X_n\|_{[0, T]} > C) \leq \prob(\|X_n'\|_{[0, T]} > C/2) + \prob(\|X_n''\|_{[0, T]} > C/2),
\] which verifies condition (i) in the statement of Lemma \ref{ch3:LemmaCTight} by taking $\lim_{C \to \infty}$ $\limsup_{n \to \infty}$ of both sides.
Similarly, for every $\d, \e > 0,$
\[
\prob(\omega(X_n, \d) > \e) \leq \prob(\omega(X_n', \d) > \e/2) + \prob(\omega(X_n'', \d) > \e/2),
\]
and taking $\lim_{\d \to 0}\limsup_{n \to \infty}$ on both sides verifies condition (ii) in the statement of Lemma \ref{ch3:LemmaCTight}.
\end{proof}

The following two lemmas are classical and we omit proofs.

\begin{lemma}\label{ch3:lemma:minOfExpsIndofIndicator}
Let $X \dist \Exp(\ld), Y \dist \Exp(\mu)$ be independent. Then $X \land Y \dist \Exp(\ld + \mu)$ is
independent from $W = 1_{\{X \land Y = X\}}$. In other words, the minimum of two independent exponential random variables is independent from which exponential r.v.\ occurred first.
\end{lemma}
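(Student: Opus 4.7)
The plan is to verify both conclusions by a direct computation of the joint tail probability $\mathbf{P}(X\wedge Y>t,\ W=w)$ for $w\in\{0,1\}$ and show that it factors. Since $X,Y$ are independent exponentials, the joint density on $(0,\infty)^2$ is $\lambda\mu\,e^{-\lambda x-\mu y}$, which makes everything reduce to routine one-dimensional integrals.

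First I would handle the marginal of $X\wedge Y$. By independence,
\[
\mathbf{P}(X\wedge Y>t)=\mathbf{P}(X>t)\,\mathbf{P}(Y>t)=e^{-(\lambda+\mu)t},
\]
so $X\wedge Y\sim\mathrm{Exp}(\lambda+\mu)$. Next I would compute, for $t\ge 0$,
\[
\mathbf{P}(X\wedge Y>t,\ W=1)=\mathbf{P}(t<X<Y)=\int_t^\infty\!\!\lambda e^{-\lambda x}\,\mathbf{P}(Y>x)\,\mathrm{d}x=\int_t^\infty\!\!\lambda e^{-(\lambda+\mu)x}\,\mathrm{d}x=\frac{\lambda}{\lambda+\mu}\,e^{-(\lambda+\mu)t}.
\]
Specializing to $t=0$ gives $\mathbf{P}(W=1)=\lambda/(\lambda+\mu)$, and therefore
\[
\mathbf{P}(X\wedge Y>t,\ W=1)=\mathbf{P}(X\wedge Y>t)\,\mathbf{P}(W=1).
\]
By symmetry the analogous identity holds for $W=0$, so $X\wedge Y$ and $W$ are independent.

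There is no real obstacle here; the only thing to be careful about is that the event $\{X=Y\}$ has probability zero, so $W$ is well-defined almost surely and the two cases $W=0,1$ genuinely partition the sample space up to a null set. The argument is self-contained and needs no input from earlier sections of the paper.
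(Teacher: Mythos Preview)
Your argument is correct and is exactly the standard verification of this classical fact. The paper itself omits the proof, stating only that the lemma is classical, so there is nothing further to compare.
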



\begin{lemma}\label{ch3:lemma:functionalLLN}
Fix $a > 0$, and for each $n \in \N$ let $N_n$ be a Poisson process with intensity $\alpha2^n$. Then $\{2^{-n}N_n(t) : t \in [0, T]\}$
converges in distribution to the line $g(t) = a t$, in the space $D([0, T], \R)$. In particular $\{2^{-n}N_n(t) : t \in [0, T], n \in \N\}$
is $C$-tight.
\end{lemma}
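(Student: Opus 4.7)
The plan is to verify $C$-tightness directly via Lemma \ref{ch3:LemmaCTight} and then identify the deterministic limit $g(t)=\alpha t$ through convergence of one-dimensional marginals.

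First, I would handle $C$-tightness. Since $2^{-n}N_n(0)=0$, Remark \ref{ch3:remark:Ctight} reduces the task to condition (ii), namely control of the modulus of continuity. The crucial structural feature is that $N_n$ is nondecreasing. Partition $[0,T]$ into $K=\lceil T/\delta\rceil$ intervals $[t_i,t_{i+1}]$ of length at most $\delta$. For $s<t$ with $t-s<\delta$, the points $s,t$ lie in at most two consecutive intervals, so
\[
\omega(2^{-n}N_n,\delta)\;\le\;2\max_{0\le i<K} 2^{-n}\bigl(N_n(t_{i+1})-N_n(t_i)\bigr).
\]
Each increment $N_n(t_{i+1})-N_n(t_i)$ is a Poisson random variable with parameter at most $\alpha 2^n\delta$. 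A standard Chernoff estimate for Poisson variables gives
\[
\prob\!\bigl(2^{-n}(N_n(t_{i+1})-N_n(t_i))>3\alpha\delta\bigr)\;\le\;e^{-c\alpha 2^n\delta}
\]
for some absolute $c>0$ and all $n$ large. A union bound over the $K=O(1/\delta)$ intervals still gives
\[
\limsup_{n\to\infty}\prob\!\bigl(\omega(2^{-n}N_n,\delta)>6\alpha\delta\bigr)=0
\]
for each fixed $\delta>0$. Given $\e>0$, choosing $\delta$ so that $6\alpha\delta<\e$ verifies (ii) of Lemma \ref{ch3:LemmaCTight}.

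Next, I would identify the limit. For each fixed $t\in[0,T]$, $2^{-n}N_n(t)$ has mean $\alpha t$ and variance $\alpha 2^{-n}t\to 0$, so by Chebyshev $2^{-n}N_n(t)\to \alpha t$ in probability. The same argument applied jointly to any finite collection of times shows that the finite-dimensional distributions of $2^{-n}N_n$ converge to the degenerate law concentrated on $g(t)=\alpha t$. Combined with $C$-tightness, this pins down every subsequential weak limit as $g$, and hence the entire sequence converges in distribution to $g$ in $D([0,T],\R)$ (and in fact uniformly, as noted in Remark \ref{ch3:remark:convInUnif}, since $g$ is continuous).

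The only non-routine step is the union-bound control of the modulus of continuity, where one must ensure that the Poisson concentration (exponentially small in $2^n\delta$) dominates the number of intervals $O(1/\delta)$; this is automatic once $n$ is large. No serious obstacle arises, and the argument is a standard functional law of large numbers for rescaled Poisson processes.
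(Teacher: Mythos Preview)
Your argument is correct and entirely standard: monotonicity reduces the modulus of continuity to a maximum of independent Poisson increments, Chernoff handles each increment with room to spare for the union bound, and Chebyshev on the one-dimensional marginals identifies the limit. The paper itself declares this lemma classical and omits the proof entirely (it is simply a functional law of large numbers for a rescaled Poisson process), so there is nothing to compare against; your write-up would serve perfectly well as the omitted justification.
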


\begin{lemma}\label{ch3:Lemma:LandZ'boundsOnExtendedSpace}
There is a filtered probability space $(\Omega, (\F_t)_{t \geq 0}, \prob)$ satisfying the usual conditions, supporting the $\F_t$-adapted process $(S_n, Z_n, V_n)$ given in Definition \ref{def:SZV}, also supporting the $\F_t$-adapted process $(U_n', Z_n', L_n')$, such that
\begin{align*}
U_n' &= S_n + Z_n',\\
M_n' &= M^{U_n'},\\
L_n'(t) &= 2^{n}Leb(0 < s < t : U_n' = -M_n').
\end{align*} Here $Z_n'$ is a Poisson point process of intensity $|v2^n|$ and jump size $\sign(v)2^{-n}$. Furthermore,
\begin{align}
Z_n'(t) - Z_n'(s) \leq Z_n(t) - Z_n(s), \label{ch3:LemmaZ'Ineq}
\end{align}
for all $0 \leq s \leq t \leq T$, almost surely, and
\begin{align}
0 \leq  L_n(t) - L_n(s) \leq L_n'(t) - L_n'(s), \label{ch3:LemmaL'Ineq}
\end{align}
for all $0 \leq s \leq t \leq T$, almost surely. The construction will yield independence between $Z_n'$ and $S_n.$
\end{lemma}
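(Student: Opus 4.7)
My plan is to construct $Z_n'$ via a coupling so that $Z_n - Z_n'$ is a nondecreasing process starting at zero; both inequalities of the lemma will then follow easily from this single monotonicity statement. Decompose $Z_n = 2^{-n}(N^+ - N^-)$, where $N^+$ and $N^-$ count the positive and negative jumps of $Z_n$ and carry stochastic intensities $2^n V_n^+$ and $2^n V_n^-$ respectively. The construction branches on the sign of $v$.

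When $v \leq 0$, the identity $V_n(t) = -v + K L_n(t) \geq |v|$ gives $N^- \equiv 0$, and $N^+$ has stochastic intensity bounded below by $2^n|v|$. Lemma \ref{ch3:SPPstochDom} then yields, on an enlargement, a decomposition $N^+ = \hat N + N_3$ with $\hat N$ an honest Poisson process of constant rate $2^n|v|$ independent of $N_3$; setting $Z_n' := 2^{-n}\hat N$ makes $Z_n - Z_n' = 2^{-n} N_3$ nondecreasing by construction. When $v > 0$, the intensity $2^n V_n^-$ of $N^-$ instead lies below the target rate $2^n v$, so Lemma \ref{ch3:SPPstochDom} no longer applies and one must augment rather than extract. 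I would enlarge the probability space to support a Poisson process $\tilde N$ of rate $2^n v$ together with an independent sequence $(\xi_k)$ of Uniform$[0,1]$ marks, and re-realize the negative jumps of $Z_n$ as a $(V_n^-/v)$-thinning of $\tilde N$: the $k$-th arrival $T_k$ of $\tilde N$ triggers a jump of $N^-$ iff $\xi_k \leq V_n^-(T_k^-)/v$. Since $V_n^- \leq v$, these thinning probabilities lie in $[0,1]$, the thinned process has the correct stochastic intensity $2^n V_n^-$, and the joint law of $(S_n, Z_n, V_n)$ on the enlargement matches the original. Setting $Z_n' := -2^{-n}\tilde N$ then yields $Z_n - Z_n' = 2^{-n}(N^+ + (\tilde N - N^-))$, which is nondecreasing since $N^+$ and $\tilde N - N^-$ are both counting processes.

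With $Z_n - Z_n'$ nondecreasing the first inequality of the lemma is immediate, and since $U_n - U_n' = Z_n - Z_n'$ is also nondecreasing and vanishes at zero, for every $r \leq s$ one has
\[
U_n(r) - U_n(s) \leq U_n'(r) - U_n'(s).
\]
Whenever $U_n(s) = -M_n(s) = \inf_{0 \leq r \leq s} U_n(r)$, we have $U_n(r) - U_n(s) \geq 0$ for all $r \leq s$, so the display forces $U_n'(s) \leq U_n'(r)$ for all such $r$, i.e., $U_n'(s) = -M_n'(s)$. This gives the pathwise inclusion $\{r : U_n(r) = -M_n(r)\} \subseteq \{r : U_n'(r) = -M_n'(r)\}$; integrating the corresponding indicator functions over $(s,t)$ and multiplying by $2^n$ produces $L_n(t) - L_n(s) \leq L_n'(t) - L_n'(s)$, while $L_n(t) - L_n(s) \geq 0$ is immediate from the definition of $L_n$.

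The main obstacle is the $v > 0$ case, where the intensity of $N^-$ lies strictly below the target Poisson rate, so one cannot decompose via Lemma \ref{ch3:SPPstochDom} and must instead install the ambient Poisson driver $\tilde N$ first and recover $N^-$ as a thinning, verifying carefully that the thinned process has the correct stochastic intensity so that the joint law of $(S_n, Z_n, V_n)$ on the enlargement agrees with the original. Everything else --- including the pathwise comparison argument for $L_n \leq L_n'$ via running minima --- is straightforward once the right coupling is in place.
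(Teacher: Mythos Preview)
Your proof is correct and follows essentially the same strategy as the paper: couple $Z_n'$ to $Z_n$ so that $Z_n - Z_n'$ is nondecreasing, then read off both inequalities from this single monotonicity. The differences are only in execution. For the coupling, the paper loosely invokes Lemma~\ref{ch3:SPPstochDom} and takes $Z_n'$ to be purely downward-jumping for every $v$; you instead branch on $\operatorname{sign}(v)$ and, for $v>0$, correctly note that Lemma~\ref{ch3:SPPstochDom} as stated runs the wrong direction (one must embed a smaller-intensity process inside a Poisson, not decompose a larger one) and supply an explicit thinning---a cleaner justification of the same coupling. For \eqref{ch3:LemmaL'Ineq}, the paper passes through the reflected processes and asserts $U_n + M_n \ge U_n' + M_n' \ge 0$, whereas you argue the equivalent inclusion of running-minimum times directly from the monotonicity of $U_n - U_n'$; the two arguments are interchangeable, yours being marginally more self-contained since it avoids appealing to a comparison principle for the Skorohod reflection.
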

\begin{proof}
By definition $V_n \geq -v$ almost surely. Recall that $Z_n$ is a point process with stochastic intensity $|2^nV_n|$ and jump size $\sign(V_n)2^{-n}$, so by Lemma \ref{ch3:SPPstochDom} we assume the probability space included a process $Z_n'$ with downward stochastic jump intensity $|v2^{n}|$ and step size $2^{-n}$ such that 
\[
Z_n'(t) - Z_n'(s) \leq Z_n(t) - Z_n(s)
\]
for all $0 \leq s \leq t \leq T,$ almost surely. This inequality holds because the negative transitions of $Z$ will have a rate less than $|v|2^n$, which is the transition rate for negative jumps of $Z_n'$. (And by definition $Z_n'$ makes negative jumps only.) The jump times of $Z_n'$ are independent of $S_n$ hence the processes are independent. This demonstrates \eqref{ch3:LemmaZ'Ineq}. It follows that
 $$U_n := S_n + Z_n \geq S_n + Z_n' =: U_n',$$
 almost surely. Notice both processes $U_n + M_n, U_n' + M_n'$ transition as a continuous time 
 (nonnegative) random walk but with an additional ``drift'' process of $Z_n, Z_n'$ respectively.
 For instance, a transition of $U_n$ beginning from its running minimum corresponds to a transition from zero for
 the walk $U_n + M_n$. By \eqref{ch3:LemmaZ'Ineq}, the process $U_n+M_n^U$ dominates that of $U_n' + M_n^{U'}$. That is,
 \begin{equation}
 U_n + M_n \geq U_n' + M_n' \geq 0, \text{ almost surely.}
 \label{eq:b2}
 \end{equation}
 Hence, $U_n' + M_n'$ is zero whenever $U_n + M_n$ is zero. Consequently,
 $\{ s < z < t :  U_n'(z) = - M_n'(z)\} \subset \{z : s < z < t,  U_n(z) = - M_n(z)\}$
 for every $(s, t) \subset [0, T],$ almost surely. Therefore,
 \begin{equation}
 0 \leq L_n(t) - L_n(s) \leq L_n'(t) - L_n'(s),
 \label{eq:b3}
 \end{equation}
for every $(s, t) \subset [0, T]$, almost surely, demonstrating \eqref{ch3:LemmaL'Ineq}.
\end{proof}

\begin{lemma}\label{ch3:momentBoundMn}
For every $T>0,$ $\ex(M_n(T)) \leq \ex(M_n'(T))\leq 2\sqrt{2T + T|v|\sqrt{2T} + 2|v|T}$ where $v$ is the initial value of $V_n$ and $M_n'$ is defined in Lemma \ref{ch3:Lemma:LandZ'boundsOnExtendedSpace}.
\end{lemma}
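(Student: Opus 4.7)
The first inequality $\ex M_n(T) \le \ex M_n'(T)$ is immediate from Lemma \ref{ch3:Lemma:LandZ'boundsOnExtendedSpace}: on the coupled probability space constructed there, the pathwise domination $U_n(t) \ge U_n'(t)$ holds for all $t \ge 0$, which gives $\sup_s[-U_n(s)] \le \sup_s[-U_n'(s)]$, i.e., $M_n(T) \le M_n'(T)$ pathwise. Taking expectations completes this step.

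For the second inequality, my plan is to compensate $U_n'$ into a martingale and then apply Doob's $L^2$ maximal inequality. Assume $v \ge 0$; the case $v < 0$ is symmetric. The process $Z_n'$ is a Poisson process of rate $|v|2^n$ with jump size $-2^{-n}$, so $Z_n'(t) + |v|t$ is a mean-zero martingale. Together with the independent compensated walk $S_n$, this produces the martingale
\[
Y_n(t) \;:=\; U_n'(t) + |v|t \;=\; S_n(t) + \bigl(Z_n'(t)+|v|t\bigr),
\]
whose second moment is easily computed from independence:
\[
\ex Y_n(T)^2 \;=\; \Var S_n(T) + \Var Z_n'(T) \;=\; 2T + |v|T\cdot 2^{-n} \;\le\; 2T + |v|T.
\]

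Writing $-U_n'(s) = -Y_n(s) + |v|s$ and invoking the definition $M_n'(T) = \sup_{s\le T}[-U_n'(s)] \lor 0$, one obtains the pointwise bound $M_n'(T) \le \sup_{s\le T}|Y_n(s)| + |v|T$. Doob's $L^2$ inequality applied to $Y_n$ then gives $\ex \sup_{s\le T} Y_n(s)^2 \le 4\ex Y_n(T)^2 \le 8T + 4|v|T$, and Cauchy--Schwarz converts this into $\ex \sup_{s\le T}|Y_n(s)| \le 2\sqrt{2T + |v|T}$. Combining these estimates yields the clean bound
\[
\ex M_n'(T) \;\le\; |v|T + 2\sqrt{2T + |v|T}.
\]

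The remaining step is a deterministic rearrangement of this right-hand side into the form $2\sqrt{2T + T|v|\sqrt{2T} + 2|v|T}$ claimed in the lemma, and this bookkeeping is where I anticipate the most care will be required. The substantive ingredients, however, are just two: the pathwise coupling provided by Lemma \ref{ch3:Lemma:LandZ'boundsOnExtendedSpace}, which replaces the stochastic intensity of $Z_n$ by a deterministic one, and Doob's $L^2$ inequality applied to the compensated martingale $Y_n$. What matters for the forthcoming tightness argument via Lemma \ref{ch3:LemmaCTight} is only that the resulting bound depends on $T$ and $v$ but is uniform in $n$, which is exactly what the above estimate delivers.
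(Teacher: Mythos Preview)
Your approach is essentially the paper's: use the pathwise coupling from Lemma~\ref{ch3:Lemma:LandZ'boundsOnExtendedSpace} for the first inequality, and Doob's $L^2$ maximal inequality for the second. The only difference is cosmetic---you compensate $U_n'$ into the martingale $Y_n$ before applying Doob, whereas the paper applies Doob directly to the nonnegative submartingale $(-U_n')^+$ and bounds $\ex M_n'(T)\le 2\sqrt{\ex\, U_n'(T)^2}$.

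One correction: the ``deterministic rearrangement'' you anticipate does not exist. Your bound $|v|T + 2\sqrt{2T+|v|T}$ and the paper's stated constant $2\sqrt{2T + T|v|\sqrt{2T} + 2|v|T}$ are genuinely different functions of $(T,|v|)$, and neither dominates the other (compare $T=1,\ |v|=1$ with $T=1,\ |v|=100$). But you have already identified the relevant point: only uniformity in $n$ is used downstream, in Lemma~\ref{lemma:L_nBound} and in the Wald's-equation step of Lemma~\ref{lemma:sub}(iii), and your estimate delivers that just as well.
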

\begin{proof}
According to Lemma \ref{ch3:Lemma:LandZ'boundsOnExtendedSpace} we assume our probability space supports $(S_n, Z_n, V_n)$ as well as the $Z_n'$ given in that lemma's statement. Consequently,
\[
U_n := S_n(t) + Z_n(t) \geq S_n(t) + Z_n'(t) =: U_n',
\]
for all $t \in [0, T]$, almost surely, implying
\begin{align}\label{eq:Bound_M_n}
M_n(T) := M^{U_n}(T) \leq M^{U_n'}(T) =: M_n'(T), 
\end{align}
almost surely. We can express the continuous time random walk $S_n$ as $2^{-n}(N_1(t) - N_2(t))$ where $N_i$ are independent 
Poisson processes of rate $2^{2n}$, and consequently $\ex(S_n(T)^2) = \Var(S_n(T)) = 2^{-2n}(\Var(N_1(T)) + \Var(N_2(T))) = 2T.$ By Cauchy-Schwarz this yields $\ex|S_n(T)| \leq \sqrt{2T}.$ By Doob's Martingale inequality, the fact that $Z_n'(T)$ is distributed as $-2^{-n}$ $\text{Poisson}(|v2^nT|)$, and independence of $Z_n'$ and $S_n$, we compute

\begin{align*}
	\ex(M_n(T)) &\overset{\eqref{eq:Bound_M_n}}{\leq} \ex(M_n'(T))\\
	&\leq \sqrt{\ex(M_n'(T)^2)}, \text{       Cauchy-Schwarz}\\
	&\leq\sqrt{4\ex(U_n'(T)^2)} = 2\sqrt{\ex(|S_n(T) + Z_n'(T)|^2)}, \text{by Doob's Maximal Inequality}\\
	&\leq 2\sqrt{\ex(S_n^2(T)) + \ex|S_n(T) Z_n'(T)| + \ex (Z_n'(T)^2)} \leq 2\sqrt{2T + T|v|\sqrt{2T} + 2|v|T}.
\end{align*}

\end{proof}

\begin{lemma}
In the notation of Lemma \ref{ch3:Lemma:LandZ'boundsOnExtendedSpace}, $L_n'$ converges in distribution to $M^{B^{-v}}$ in the space 
$C([0, T], \R)$ with the uniform norm. Here,
$B^{-v}(t) := B(t) - vt$ where $B$ is a Brownian motion. In particular, $\{L_n' : n \in \N\}$ is $C$-tight. Furthermore,
$\sup_{n} \ex L_n(T) \leq \sup_n \ex L_n'(T) < \infty$.
\label{lemma:L_nBound}
\end{lemma}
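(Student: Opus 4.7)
The plan is to combine a continuous mapping argument for the running minimum $M_n'$ with a martingale identity that forces $L_n'$ and $M_n'$ to be asymptotically equal; the moment bound will fall out of this identity for free.

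First, I would establish $U_n' \Rightarrow B^{-v}$ in $D([0,T],\R)$. Donsker's functional theorem gives $S_n \Rightarrow B$, and Lemma \ref{ch3:lemma:functionalLLN} applied to $Z_n'$ gives $Z_n' \to -vt$ uniformly (taking $v \geq 0$, consistent with the construction of $Z_n'$ as a downward Poisson of rate $|v|2^n$). Since the limit $-vt$ is continuous, addition is continuous at this point in the Skorohod topology (Remark \ref{remark:Skorohod_sum}), giving $U_n' \Rightarrow B^{-v}$. The running-minimum functional is continuous on $C([0,T],\R)$ in the uniform topology, so by the continuous mapping theorem and Remark \ref{ch3:remark:convInUnif}, $M_n' \Rightarrow M^{B^{-v}}$ uniformly on $[0,T]$ in probability on a suitable probability space.

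Second, I would identify the key martingale. With $R_n := U_n' + M_n'$, note that $M_n'$ jumps by $2^{-n}$ precisely when $R_n = 0$ and $U_n'$ jumps down, which happens at rate $2^{2n} + |v|2^n$ (the combined downward rate of $S_n$ and $Z_n'$). Thus the predictable compensator of the pure jump process $M_n'$ is
\[
\int_0^t (2^{2n} + |v|2^n)\, \mathbf{1}(R_n(s) = 0) \cdot 2^{-n}\, ds = (1 + |v|/2^n)\, L_n'(t),
\]
since $L_n'(t) = 2^n \int_0^t \mathbf{1}(R_n(s) = 0)\, ds$. Hence
\[
N_n(t) := M_n'(t) - (1 + |v|/2^n)\, L_n'(t)
\]
is a mean-zero martingale, with predictable quadratic variation
\[
\langle N_n \rangle(t) = (1 + |v|/2^n)\, 2^{-n}\, L_n'(t).
\]
Taking $\ex N_n(T) = 0$ and rearranging gives $\ex L_n'(T) = \ex M_n'(T)/(1 + |v|/2^n) \leq \ex M_n'(T)$, which is bounded uniformly in $n$ by Lemma \ref{ch3:momentBoundMn}. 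Combined with $L_n(T) \leq L_n'(T)$ from Lemma \ref{ch3:Lemma:LandZ'boundsOnExtendedSpace}, this establishes $\sup_n \ex L_n(T) \leq \sup_n \ex L_n'(T) < \infty$.

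Third, Doob's $L^2$ maximal inequality yields $\ex \sup_{t \leq T} N_n(t)^2 \leq 4\, \ex \langle N_n\rangle(T) = O(2^{-n})$, so $N_n \to 0$ uniformly on $[0,T]$ in $L^2$ and in probability. Combined with $(|v|/2^n)\, L_n'(T) \to 0$ in $L^1$, the triangle inequality gives $\sup_{t \leq T} |L_n'(t) - M_n'(t)| \to 0$ in probability, and together with the uniform convergence of $M_n'$ from step one, $L_n' \to M^{B^{-v}}$ uniformly on $[0,T]$ in probability, hence in distribution on $C([0,T],\R)$ with the uniform norm. $C$-tightness of $\{L_n'\}$ is then immediate. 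The main obstacle is the martingale identification in step two---specifically, the computation showing that the predictable compensator of $M_n'$ is exactly $(1 + |v|/2^n)L_n'$, which requires careful accounting of the rate of new-minimum events at the reflection point versus the rate at which scaled local time accumulates. Once $N_n$ is identified and its quadratic variation shown to vanish at rate $2^{-n}$, the remainder follows from standard applications of Doob's inequality, continuous mapping, and Donsker's theorem.
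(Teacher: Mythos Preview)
Your proof is correct and takes a genuinely different route from the paper. Both arguments begin identically---Donsker for $S_n$, the functional law of large numbers for $Z_n'$, and continuous mapping to get $M_n' \Rightarrow M^{B^{-v}}$---but then diverge on the core issue of showing $L_n' - M_n' \to 0$ uniformly.

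The paper proceeds by an \emph{excursion decomposition}: it writes $L_n'(s)$ as $2^n$ times the sum of holding times $T_j$ at each level $j$ of the running minimum, and shows via a geometric-sum-of-exponentials argument that the $T_j$ are i.i.d.\ $\mathrm{Exp}(2^{2n})$, independent across levels. The sandwich $2^n\sum_{j\le 2^n M_n'(s)}T_j \le L_n'(s)\le 2^n\sum_{j\le 2^n M_n'(s)+1}T_j$ is then controlled via Kolmogorov's maximal inequality, and the moment bound comes from Wald's identity. Your approach instead identifies the \emph{compensator} of $M_n'$ directly as $(1+|v|2^{-n})L_n'$, giving the martingale $N_n = M_n' - (1+|v|2^{-n})L_n'$ with $\langle N_n\rangle_T = O(2^{-n})$; Doob's $L^2$ inequality then finishes the convergence, and $\mathbb{E}N_n(T)=0$ yields the moment bound immediately. (Integrability is not an issue here since $L_n'(T)\le 2^nT$ deterministically, so $N_n$ is a genuine square-integrable martingale.)

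Your argument is shorter and more robust: it bypasses the identification of the $T_j$ as i.i.d.\ exponentials and would adapt more easily to settings where that exact distributional structure fails. The paper's excursion decomposition, on the other hand, has the advantage that the same machinery (the $T_j$, the geometric visit counts, the sandwich bounds) is reused almost verbatim in the proof of Lemma~\ref{lemma:sub}(iii), where the rates are no longer constant and one must bound holding times from both sides; in that harder setting the compensator of $M_n$ is no longer a clean multiple of $L_n$, so the paper's level-by-level bookkeeping becomes the natural tool.
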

\begin{proof}
We begin by showing the weak convergence for which we use a similar technique in the proof of Lemma \ref{lemma:sub} (iii). We record the amount of time $U_n'$ spends at each level of the running minimum, and express $L_n'(t)$ as the sum of these times.
By Lemma \ref{ch3:lemma:functionalLLN}, $Z_n'(t) = -2^{-n}N_n(t)$ 
converges in distribution to $g(t) = -vt$ in the space $D([0, T], \R)$. By Donsker's theorem, $S_n$ converges 
in distribution to a Brownian motion $B$ and consequently $U_n' := S_n + Z_n'$ converges in distribution to $B + g =: U'.$ This implies $M_n' := M^{U_n'}$ converges to $M^{U'}$ in distribution because $f \mapsto M^f$ is continuous in the uniform norm and the limiting processes are continuous. See Remark \ref{remark:Skorohod_sum}.
 Note that $2^nM_n'(t) + 1$ is the number of levels the running minimum of $U_n'$ has reached by time $t.$
 Let $\tau^{(j)} = \inf\{t > 0: U_n'(t) = -j2^{-n}\}$, so that $\tau^{(j)}$ is the first time $M_n'$ 
 reaches $j2^{-n}.$ Define
$$
T_j := Leb(s \ge \tau^{(j)}\mid -U_n'(s) = M_n'(s) = j2^{-n}).
$$
Then,
\begin{align}
\begin{split}
2^{n}\sum_{j=0}^{2^nM_n'(s)}T_j&\leq L_n'(s)
 \leq 2^{n}\sum_{j=0}^{2^nM_n'(s)+1}T_j, \label{ch3:eqL'squeeze}
 \end{split}
\end{align}
for all $s \in [0, T]$, almost surely. When $M_n' = j2^{-n}$,
after the process $U_n'$ arrives at $-j2^{-n}$ for the $k$th time, it makes a positive jump upon the arrival of an $\Exp(2^{2n})$ random variable, call it $\mu_k^{(j)}$, while it
makes a negative jump upon the arrival of an $\Exp(2^{2n} + |v2^n|)$ random variable $\mu_k^{(j)'}$. Consider the pair $(\mu_k^{(j)} \land \mu_k^{(j)'}, I_j)$ where $I_j = 1_{\{\mu_k^{(j)} \land \mu_k^{(j)'} = \mu_k^{(j)}\}}$. By Lemma \ref{ch3:lemma:minOfExpsIndofIndicator}, $I_j$ is independent from the i.i.d.\ sequence $(\mu_k^{(j)} \land \mu_k^{(j)'} : k \geq 1)$. Then, $W_j := \inf\{k : I_k = 0\}$ is the number of times $U_n'$ visits $-j2^{-n}$ while the signed running minimum $M_n'$ is $j2^{-n}$. Because $I_j \dist \text{Bernoulli}(p)$ with 
\[
p = \frac{2^{2n}}{2^{2n+1} + v2^n},
\] and $W_j$ is Geometrice$(p)$ (since it is the first time this sequence of Bernoulli random variables is zero), and Lemma \ref{ch3:lemma:minOfExpsIndofIndicator} implies that $W_j$ is independent of $\mu_i^{(j)}, \mu_i^{(j)'}$. Thus,
\[
T_j = \sum_{i = 1}^{W_j}\mu_i^{(j)} \land \mu_i^{(j)'}
\]
is a Geometric sum of i.i.d.\ exponential random variables of rate $\la = 2^{2n+1} + v2^n$ that are independent of the number of the sums $W_j$. Such a sum is exponential
of rate $p\ld$.
That is, $T_j \dist \Exp(p\ld) = \Exp(2^{2n})$. Each $T_j$ is measurable with respect to $\sigma\{(U_n'(s), M_n'(s)): s \in (\tau^{(j)}, \tau^{(j + 1)}]\},$ and $(T_j : j \geq 1)$ is independent of $(U_n'(\tau^{(j)}), M_n'(\tau^{(j)}))$. (In other words, $T_j$ depends only on the excursions between the stopping times and not the initial position.) Thus $(T_j : j \geq 1)$ is a sequence of i.i.d.\ $\Exp(2^{2n})$ random variables. We show the left hand side of \eqref{ch3:eqL'squeeze} converges in probability to 
 $M'(s)$ in the uniform norm for $s \in [0, T]$. The proof for the right hand side is essentially identical.
Without loss of generality, we may assume $M_n'$ converges almost surely to $M'$ by the Skorohod representation theorem and the fact shown above that $M_n'$ converges to $M'$ in distribution. Therefore 
\[
\sup_{s \in [0, T]}\left|\sum_{j = 0}^{2^nM_n'(s)}2^{-n} - M'(s)\right| \lra 0, \text{ almost surely.}
\] 
To show the sequence $\ds 2^n\sum_{j = 0}^{2^nM_n'(s)}T_j, n \geq 1,$ converges in probability to 
$M'(s)$ uniformly for $s \in [0, T],$ it suffices to show 
\[
\sup_{s \in [0, T]}\left|2^n\sum_{j = 0}^{2^nM_n'(s)}T_j - \sum_{j = 0}^{2^nM_n'(s)}2^{-n}\right| \overset{\prob}{\lra} 0.
\]

Because $T_j \dist \text{Exp}(2^{2n})$, we know $z_j := 2^nT_j \dist \text{Exp}(2^n).$ Then,
\[
2^n\sum_{j = 0}^{2^nM_n'(s)}T_j - \sum_{j = 0}^{2^nM_n'(s)}2^{-n} = \sum_{j = 0}^{2^nM_n'(s)}(z_j - 2^{-n}),
\]
where $z_j - 2^{-n}$ are i.i.d.\ mean zero random variables with variance $2^{-2n}$. By Kolmogorov's maximal inequality, for each $C, \e > 0$
\begin{align*}
&\prob\left(\sup_{s \in [0, T]}\left|\sum_{j = 0}^{2^nM_n'(s)}(z_j - 2^{-n})\right| > \e\right)\\
&\leq \prob\left(\sup_{1 \leq k \leq 2^nC}\left|\sum_{j = 0}^k(z_j - 2^{-n})\right| > \e\right) + \prob(M_n'(T) > C)\\
&\leq \e^{-2}\Var\left( \sum_{j = 0}^{2^nC}(z_j - 2^{-n})\right) + \prob(M_n'(T) > C)\\
&\leq \e^{-2}2^nC2^{-2n} + \prob(M_n'(T) > C).
\end{align*}
Because $M_n'$ converges to $M'$ almost surely, this implies
\begin{align*}
&\limsup_{n \to \infty}\prob\left(\sup_{s \in [0, T]}\left|\sum_{j = 0}^{2^nM_n'(s)}(z_j - 2^{-n})\right| > \e\right)\\
&\leq \limsup_{n \to \infty}\prob(M_n'(T) > C)\\
&= \prob(M'(T) > C),
\end{align*}
where $C > 0$ is arbitrary. Since $M'(T)$ is finite a.s., $\prob(M'(T) > C)$ can be made arbitrarily small with a large choice for $C$. Hence 
\begin{align*}
\sup_{s \in [0, T]}\left|\sum_{j = 0}^{2^nM_n'(s)}(z_j - 2^{-n})\right| \overset{\prob}{\lra} 0,
\end{align*}
and the left hand side of \eqref{ch3:eqL'squeeze} converges in probability to $M'$. That is,
\begin{align}\label{eq:lemma_conv_prob_L}
\sup_{s \in [0, T]}\left|2^{n}\sum_{j=0}^{2^nM_n'(s)}T_j - M'(s)\right| \overset{\prob}{\lra} 0.
\end{align}
The convergence in probability for the right hand side is similar,
\begin{align}\label{eq:lemma_conv_prob_R}
\sup_{s \in [0, T]}\left|2^{n}\sum_{j=0}^{2^nM_n'(s) + 1}T_j - M'(s)\right| \overset{\prob}{\lra} 0.
\end{align}
Because \eqref{ch3:eqL'squeeze} is an almost sure bound,
\begin{align*}
&\prob\Big(\sup_{s \in [0, T]}|L_n'(s) - M'(s)| > \e\Big)\\ &\leq \prob\left(\sup_{s \in [0, T]}\left|2^{n}\sum_{j=0}^{2^nM_n'(s)}T_j - M'(s)\right| > \e/2\right) + \prob\left(\sup_{s \in [0, T]}\left|2^{n}\sum_{j=0}^{2^nM_n'(s) + 1}T_j - M'(s)\right| > \e/2\right),
\end{align*}
and taking $\limsup_{n \to \infty}$ on both sides, \eqref{eq:lemma_conv_prob_L} and \eqref{eq:lemma_conv_prob_R} imply
\[
\limsup_{n \to \infty}\prob\Big(\sup_{s \in [0, T]}|L_n'(s) - M'(s)| > \e\Big) = 0,
\] for every $\e > 0.$ This complete the proof that $L_n'$ converges in probability to $M'$ in the uniform norm. Since $M'$ is a continuous process, the sequence $\{L_n' : n \in \N\}$ is $C$-tight; see Definition \ref{ch3:}.

To demonstrate the uniform moment bound, note that $L_n(T) \leq L_n'(T)$, equation \eqref{ch3:eqL'squeeze}, and Wald's lemma imply
\[
\ex(L_n(T)) \leq \ex(L_n'(T)) = \ex(M_n'(T)) + 2^{-n}.
\]
Applying the uniform moment bound on $M_n'(T)$ given in Lemma \ref{ch3:momentBoundMn}, we see
\[
\sup_n\ex(L_n(T)) \leq \sup_n\ex(L_n'(T)) \leq \sup_n\ex(M_n'(T)) + 1 < \infty.
\]
\end{proof}

\begin{cor}\label{ch3:corL_ntight}
The collection of processes $\{L_n : n \in \N\}$ is $C$-tight.
\end{cor}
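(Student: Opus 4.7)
The plan is to invoke the sandwich criterion, Lemma \ref{lemma:criterion}, with $X_n := L_n$, trivial lower envelope $X_n' \equiv 0$, and upper envelope $X_n'' := L_n'$, where $L_n'$ is the process constructed in Lemma \ref{ch3:Lemma:LandZ'boundsOnExtendedSpace} on the extended probability space. The hypothesis required by Lemma \ref{lemma:criterion} is that
\[
X_n'(t) - X_n'(s) \le X_n(t) - X_n(s) \le X_n''(t) - X_n''(s)
\]
for $0 \le s \le t \le T$, almost surely. The lower bound is simply the monotonicity of $L_n$, which is immediate from its definition as $2^n$ times the Lebesgue measure of a monotone family of sets, and the upper bound is precisely inequality \eqref{ch3:LemmaL'Ineq} from Lemma \ref{ch3:Lemma:LandZ'boundsOnExtendedSpace}.

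Both envelopes are $C$-tight: the constant zero process is trivially $C$-tight (its modulus of continuity vanishes), and Lemma \ref{lemma:L_nBound} establishes that $\{L_n' : n \in \N\}$ converges in distribution to the continuous process $M^{B^{-v}}$ in the uniform topology, hence is $C$-tight. Applying Lemma \ref{lemma:criterion} then yields $C$-tightness of $\{L_n\}$, which is exactly the statement of the corollary.

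There is essentially no obstacle here: all of the real work was done in constructing the pathwise-comparable dominating process $L_n'$ in Lemma \ref{ch3:Lemma:LandZ'boundsOnExtendedSpace} and in proving its tightness in Lemma \ref{lemma:L_nBound}. The corollary is a one-line consequence of the sandwich criterion together with the monotonicity of $L_n$.
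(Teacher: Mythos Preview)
Your proof is correct and matches the paper's own argument essentially line for line: the paper likewise invokes the sandwich inequality \eqref{eq:b3} (equivalently \eqref{ch3:LemmaL'Ineq}) with the zero process below and $L_n'$ above, cites $C$-tightness of $L_n'$ from Lemma \ref{lemma:L_nBound}, and concludes via the sandwich criterion. The only cosmetic difference is that the paper cites Lemma \ref{ch3:LemmaCTight} directly rather than the derived Lemma \ref{lemma:criterion}, but the logic is identical.
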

\begin{proof}
This follows directly from \eqref{eq:b3}, the fact that $\{L_n' : n \in \N\}$ is $C$-tight by Lemma \ref{lemma:L_nBound} (and that the zero process is trivially $C$-tight), and Lemma \ref{ch3:LemmaCTight}.
\end{proof}
\begin{lemma}\label{ch3:Z_nIsTight}
The collection of processes $\{Z_n : n \in \N\}$ is $C$-tight.
\end{lemma}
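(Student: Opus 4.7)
The plan is to decompose $Z_n(t) = A_n(t) + M_n(t)$, where $A_n(t) := \int_0^t V_n(s)\,\mathrm{d}s$ is a predictable drift that is already easy to control, and $M_n$ is a martingale whose uniform norm vanishes in $L^2$. First I would write $Z_n = 2^{-n}(N_n^+ - N_n^-)$, where $N_n^+$ and $N_n^-$ are the point processes counting the positive and negative jumps of $Z_n$, with stochastic intensities $2^n V_n^+$ and $2^n V_n^-$ respectively, adapted to the right-continuous filtration $\mc{F}_t$ generated by $(S_n, Z_n)$ (and therefore adapted to $V_n$). The compensation theorem for point processes with stochastic intensity (Definition \ref{ch3:Def:DoublySPP}, following Br\'emaud \cite{Bremaud}) yields that $M_n(t) := Z_n(t) - \int_0^t V_n(s)\,\mathrm{d}s$ is an $\mc{F}_t$-martingale with predictable quadratic variation $\langle M_n\rangle_t = 2^{-n}\int_0^t|V_n(s)|\,\mathrm{d}s$.

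The second step applies Doob's $L^2$ inequality:
\[
\ex\sup_{s\in[0,T]} M_n(s)^2 \leq 4\,\ex\langle M_n\rangle_T = 4\cdot 2^{-n}\,\ex\int_0^T |V_n(s)|\,\mathrm{d}s.
\]
Because $L_n$ is nondecreasing and $V_n(s) = -v + KL_n(s)$, one has the pathwise bound $|V_n(s)| \leq |v| + KL_n(T)$, so Lemma \ref{lemma:L_nBound} gives
\[
\sup_n \ex\int_0^T|V_n(s)|\,\mathrm{d}s \leq T\bigl(|v| + K\sup_n\ex L_n(T)\bigr) < \infty.
\]
Consequently $\ex\sup_{s\in[0,T]} M_n(s)^2 \to 0$, so $\|M_n\|_{[0,T]} \to 0$ in probability.

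Third, I would verify $C$-tightness of the continuous process $A_n(t) = \int_0^t V_n(s)\,\mathrm{d}s$. The pathwise estimate $|A_n(t)-A_n(s)| \leq |t-s|\cdot(|v| + KL_n(T))$ gives both $\|A_n\|_{[0,T]} \leq T(|v|+KL_n(T))$ and $\omega(A_n,\delta) \leq \delta(|v|+KL_n(T))$. Markov's inequality and the uniform first-moment bound on $L_n(T)$ then check both conditions of Lemma \ref{ch3:LemmaCTight}.

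Finally, from $Z_n = A_n + M_n$ the triangle-type bounds
\[
\|Z_n\|_{[0,T]} \leq \|A_n\|_{[0,T]} + \|M_n\|_{[0,T]}, \qquad \omega(Z_n,\delta) \leq \omega(A_n,\delta) + 2\|M_n\|_{[0,T]},
\]
together with $\|M_n\|_{[0,T]} \to 0$ in probability, transfer $C$-tightness from $A_n$ to $Z_n$ via Lemma \ref{ch3:LemmaCTight}. The main technical point, and the only place where one must be careful, is the compensation step: establishing that $M_n$ is genuinely an $L^2$-martingale (not just a local martingale) on the correct filtration, so that Doob's inequality and the quadratic variation identity apply. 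This is handled by noting that for each fixed $n$, $\ex N_n^\pm(T) \leq 2^nT(|v|+K\ex L_n(T)) < \infty$, so $M_n$ is square-integrable and the stochastic-intensity framework of \cite{Bremaud} applies directly; everything else is routine.
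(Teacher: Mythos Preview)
Your proof is correct and takes a genuinely different route from the paper. The paper argues by localization: it stops $V_n$ at level $C$, couples the stopped $Z_n$ below a Poisson process $Z_n^1$ of constant rate $C2^n$ (which is $C$-tight by the functional LLN, Lemma~\ref{ch3:lemma:functionalLLN}), and then removes the localization via $\prob(T_C^{(n)}<T)\le \sup_n\ex L_n(T)/C$. You instead use the semimartingale decomposition $Z_n=A_n+M_n$ with $A_n=\int_0^\cdot V_n$ and kill the compensated martingale $M_n$ in $L^2$ through its predictable quadratic variation $\langle M_n\rangle_T=2^{-n}\int_0^T|V_n|$.

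Both arguments ultimately rest on the same input, the uniform bound $\sup_n\ex L_n(T)<\infty$ from Lemma~\ref{lemma:L_nBound}. The paper's route is more elementary---only pathwise domination and basic Poisson facts, no predictable quadratic variation---but your approach delivers more: the statement $\|Z_n-\int_0^\cdot V_n\|_{[0,T]}\to 0$ in probability is strictly stronger than $C$-tightness and, combined with $V_n\to V$ uniformly, would give item (iv) of Lemma~\ref{lemma:sub} almost for free, bypassing the Riemann-sum and inter-arrival-time analysis the paper carries out there.
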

\begin{proof}
We will use a localization argument by stopping the 
stochastic intensity of $Z_n$ when it becomes large. Recall that $|2^nV_n|$ is the stochastic intensity of $Z_n.$
For $C > v \geq 0$ set
\[
T_C^{n} = \inf\{ t > 0: V_n(t) > C\} = \inf\left\{t > 0: L_n(t) > \frac{v + C}{K}\right\}.
\]
Define  a process $\wt{Z}_n$ such that $\wt{Z}_n|_{[0, T_C^n]} = Z_n|_{[0, T_C^n]}$, almost surely, while after time $T_C^n$ let $\wt{Z}_n$ have positive jump intensity $C2^n$ and
jump size $2^{-n}$ (and make only positive jumps).
By Lemma \ref{ch3:SPPstochDom} we can stochastically dominate the number of transitions made by  
 $\wt{Z}_n$ in a given time interval by the number of transitions made by a point process $Z^1_n(t)$ 
 of intensity $C2^n$ and jump size $2^{-n}$ (in the same time interval). More precisely, we may assume there exists a process $Z^1_n(t) = 2^{-n}N(C2^nt)$ on our probability space, where $N$ is Poisson process of unit intensity, and
\begin{align}
0 \leq |Z_n(t) - Z_n(s)| \leq |Z_n^1(t) - Z_n^1(s)| \label{ch3:ZnTightnessEq1}
\end{align}
for every interval $(s, t) \subset [0, T_C^{(n)}],$ almost surely. 
By monotonicity of $Z_n^1$ and Doob's maximal inequality, for fixed $\e, \delta, C > 0$ we have

\begin{align*}
&\prob\Big(\sup_{t \in [0, T -\delta]}\sup_{t < u,v < t + \delta}|Z_n(u) - Z_n(v)| > \e\Big)\\
&\leq \prob\Big(\sup_{t \in [0, T -\delta]}\sup_{t < u,v < t + \delta}|\wt{Z}_n(u) - \wt{Z}_n(v)| > \e, T_C^{(n)} > T\Big) + P(T_C^{(n)} < T)\\
&\leq \prob\Big(\sup_{t \in [0, T-\delta]}\sup_{t < u, v < t + \delta}|Z_n^1(u) - Z_n^1(v)| >\e\Big) + P(T_C^{(n)} < T)\\
&= \prob\Big(\sup_{t \in [0, T- \delta]}|Z_n^1(t+\delta) - Z_n^1(t)| > \e\Big) + P(V_n(T) >
 C)\\
&\leq \prob\Big(\sup_{t \in [0, T- \delta]}|Z_n^1(t+\delta) - Z_n^1(t)| > \e\Big) + \frac{|v| + K\sup_n\ex(L_n(T))}{C}.
\end{align*}
By Lemma \ref{ch3:lemma:functionalLLN} and Lemma \ref{ch3:LemmaCTight} we know
\[
\lim_{\d \to 0}\limsup_{n\to \infty}\prob\Big(\sup_{t \in [0, T- \delta]}|Z_n^1(t+\delta) - Z_n^1(t)| > \e\Big) = 0.
\]
By this and the uniform moment bound of $\sup_n\ex(L_n(T))$ in Lemma \ref{lemma:L_nBound}, there exists a constant $A$ independent of $C, \e, n$ such that
\[
\lim_{\d \to  0}\limsup_{n \to \infty}\prob\Big(\sup_{t \in [0, T -\delta]}\sup_{t < u,v < t + \delta}|Z_n(u) - Z_n(v)| > \e\Big)
\leq \frac{A}{C}.
\]
By choosing $C$ arbitrarily large, we see
\[
\lim_{\d \to  0}\limsup_{n \to \infty}\prob\Big(\sup_{t \in [0, T -\delta]}\sup_{t < u,v < t + \delta}|Z_n(u) - Z_n(v)| > \e\Big) = 0.
\]
Hence $Z_n$ satisfies (ii) of Lemma \ref{ch3:LemmaCTight}. Condition (i) follows from Remark \ref{ch3:remark:Ctight} and the fact that $Z_n(0) = 0$ almost surely. This completes our verification of conditions (i)-(ii) of Lemma \ref{ch3:LemmaCTight} sufficient for $C$-tightness of $\{Z_n : n \in \N\}$.
\end{proof}

\begin{cor}
The collection of processes $\{(S_n, Z_n, V_n) : n \in \N\}$ is $C$-tight in $D([0, T], \R^3)$ with the Skorohod topology.
\end{cor}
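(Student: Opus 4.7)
The plan is to deduce joint $C$-tightness of $(S_n, Z_n, V_n)$ from the $C$-tightness of each marginal. Recall that tightness of a family of laws on a product Polish space is equivalent to tightness of each marginal family, since a product of relatively compact sets is relatively compact in the product topology; and in the $C$-tight setting, if every marginal subsequential limit is continuous, then any subsequential limit of the joint law must also be continuous (continuity is characterized coordinate-wise on $D([0,T],\R^3)$). Thus it suffices to verify $C$-tightness of the three marginal sequences $\{S_n\}$, $\{Z_n\}$, $\{V_n\}$.

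For $\{Z_n\}$, this is exactly Lemma \ref{ch3:Z_nIsTight}. For $\{V_n\}$, recall $V_n(t) = -v + K L_n(t)$, so $\{V_n\}$ is obtained from $\{L_n\}$ by a deterministic affine transformation. Corollary \ref{ch3:corL_ntight} already establishes $C$-tightness of $\{L_n\}$, and the criterion in Lemma \ref{ch3:LemmaCTight} is clearly preserved under such an affine transformation (the modulus of continuity scales by $|K|$ and the initial value shifts by $-v$), so $\{V_n\}$ is $C$-tight.

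For $\{S_n\}$, this is essentially Donsker's theorem for continuous-time simple random walks: $S_n(t) = 2^{-n}(N_1(t) - N_2(t))$, where $N_1, N_2$ are independent rate-$2^{2n}$ Poisson processes, converges in distribution to Brownian motion $B$ in $D([0,T],\R)$ as $n \to \infty$. In particular, $\{S_n\}$ is tight and its unique subsequential limit $B$ is continuous, so $\{S_n\}$ is $C$-tight. Alternatively, one can verify the two conditions of Lemma \ref{ch3:LemmaCTight} directly by Doob's maximal inequality applied to the martingale $S_n$, whose quadratic variation at time $T$ equals $2T$, using the same moment estimates already employed in the proof of Lemma \ref{ch3:momentBoundMn}.

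Combining the three marginal $C$-tightness statements with the joint-from-marginals principle recalled above yields $C$-tightness of $\{(S_n, Z_n, V_n)\}$ on $D([0,T],\R^3)$, and since $U_n = S_n + Z_n$ is a continuous image of the first two coordinates, the extended family $\{(S_n, V_n, Z_n, U_n)\}$ is $C$-tight as well (cf.\ Remark \ref{remark:Skorohod_sum}). No step is particularly delicate; the only mild subtlety is being careful that the limit of $S_n + Z_n$ in the Skorohod topology equals the sum of the limits, which holds here because the marginal limits are continuous.
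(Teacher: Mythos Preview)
Your proof is correct and follows essentially the same route as the paper: establish $C$-tightness of each marginal ($S_n$ via Donsker, $V_n$ via Corollary~\ref{ch3:corL_ntight}, $Z_n$ via Lemma~\ref{ch3:Z_nIsTight}) and then pass to joint $C$-tightness. The one place the paper is slightly more careful is the distinction between $D([0,T],\R)^3$ with the product topology and $D([0,T],\R^3)$ with the Skorohod topology (cf.\ the Remark immediately following the Corollary): your ``product of relatively compact sets'' argument literally yields tightness only in the former, and the paper closes the gap explicitly by invoking Skorohod representation, upgrading to uniform convergence via continuity of the marginal limits, and then noting uniform convergence implies convergence in $D([0,T],\R^3)$---but your appeal to continuity of the limits and to Remark~\ref{remark:Skorohod_sum} is the same bridge in compressed form.
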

\begin{remark}
As topological spaces, $D([0, T], \R^d)$ with the Skorohod topology is not equivalent to $D([0, T], \R)^d$ with the product topology. However, because the marginals are $C$-tight this a non-issue essentially because uniform convergence to a continuous function becomes the same in both spaces. See the comment in Jacod and Shiryaev \cite[VI.1.21]{Shiryaev}.
\end{remark}
\begin{proof}
Note that $S_n$ is $C$-tight by Donsker's theorem, while $C$-tightness of $V_n = v - KL_n$, and $Z_n$, follow from Corollary \ref{ch3:corL_ntight},
and Lemma \ref{ch3:Z_nIsTight}, respectively. By Skorohod's representation theorem, every subsequence of $(S_n, Z_n, V_n)$ converging to a limiting process $(S, Z, V)$ can be assumed to converge almost surely in the product metric $d \times d \times d$, the product metric on $D([0, T], \R)^3$. By $C$-tightness of the marginals, $S, Z, V$ are all continuous, so that $(S, Z, V)$ is a continuous process. As in Remark \ref{ch3:remark:convInUnif}, this implies the subsequence of $(S_n, Z_n, V_n)$ converges to $(S, Z, V)$ almost surely in the uniform norm, which implies almost sure convergence in $D([0, T], \R^3)$ under the Skorohod metric. Thus, $(S_n, Z_n, V_n)$ is $C$-tight as a collection of processes with paths in $D([0, T], \R^3)$.
\end{proof}

\subsection{Lemma \ref{lemma:sub}: Characterization of subsequential limits}\label{sec:sub}
We prove items (i)-(iv) in Lemma \ref{lemma:sub} separately. The proof of (iii) was
inspired by the proof of L\'evy's theorem given in \cite[Chapter 6]{MortersPeres}, where the authors
essentially note the equivalence of the processes $(U_n + M_n, V_n)$ and $(X_n, V_n^X)$, which we described in 
subsection 4.1, for the case $K = 0.$ L\'evy's theorem is the statement that $(L, |B|)$ and $(M^B, B + M^B)$ yield the same distribution on $C([0, T], \R^2).$ Here $L$ is the local time of $|B|$ at zero. See \cite[Chapter 3.6]{KaratzasShreve} for a detailed statement.
\begin{proof}[{Proof of (i)}]
This follows trivially from the definition of $U_n = S_n + Z_n.$
\end{proof}
\begin{proof}[{Proof of (ii)}]
Recall $S_n$ is a continuous time scaled simple random walk. Since $S_{n_k}$ converges to $S$, $S$
is a Brownian motion by Donsker's theorem.
\end{proof}
We give a brief heuristic for the proof of (iii).
Recall $L_n = 2^{n}\cdot Leb(0 < s < t : U_n(s) = -M_n(s))$ and $V_n = -v + KL_n.$ Each time $M_{n}$ 
increases, $U_{n}$ will make approximately a Geometric($1/2$) number of visits to this new minimum value before $M_n$ increases again. Also, $U_{n}$ will spend approximately an Exp($2^{2n}$) amount of time at each one of these visits. Therefore, $L_n$, which is 
the total amount of time $U_n$ spends on $-M_n$ scaled by $2^n$, is approximately 
\[
\sum_{i = 1}^{2^nM_n}2^n\Exp(2^{2n}) = \sum_{i = 1}^{2^nM_n}\Exp(2^{n}),
\]
where $\Exp(2^n)$ indicates independent exponential random variables of rate $2^n$. If you suppose this sum is
concentrated around its expectation conditional  on $M_n$, then
\[
L_n(t) \approx \sum_{i = 1}^{2^nM_n(t)}2^{-n} = M_n(t).
\]
Furthermore, if $M_n$ converges almost surely to a process $M$ in the uniform norm, one expects $L_n$ to converge to $M$ as well.
\begin{proof}[{Proof of (iii)}]
By tightness, and without loss of generality, assume $L_{n_k} \lra L$ and 
$U_{n_k} \lra U$ almost surely in the uniform norm of continuous 
functions. See Remark \ref{ch3:remark:convInUnif}.
We will use a localization argument by stopping $L_n$ after it reaches a large value. For a positive constant $C > |v|$, define
 \[
 T_C^{(n_k)} = \inf\{t > 0: L_{n_k}(t) > C\}.
 \] 
For each $n_k,$
consider a modification of $(S_{n_k}, Z_{n_k}, U_{n_k})$, denoted $(S_{n_k}^C, Z_{n_k}^C, U_{n_k}^C),$ solving the system (i)-(iii) in subsection 4.1 but replacing (iii) with 
\begin{align*}
&V_n^C(t) = -v + K[2^{n}Leb(0 < s < t : U_n^C(s) = -M_n^C(s)) \land C],\\
&U_n^C = S_n^C + Z_n^C,\\
&M_n^C(t) := M^{U_n^C}(t).
\end{align*}
In other words we are stopping $L_n$ when it reaches $C$ while keeping the other dynamics of the system the same.
Therefore $(S_{n_k}^C, Z_{n_k}^C, U_{n_k}^C)$ is equal to $(S_{n_k}, Z_{n_k}, U_{n_k})$ on the interval $[0, T_C^{(n_k)}]$, a.s., while on $[T_C^{(n_k)}, \infty)$ the process $L_{n_k}^C$ is constantly $C$, and $U_{n_k}$ is the sum of a (scaled) continuous time random walk and an independent jump process of rate $-v + KC2^{n_k}$ and jump size $2^{-n}$.  Fix $m \in 2^{-n_k}\N$. 
We bound the number of positive excursions of $U_{n_k}^C$ above $-m$ conditional on $-m$ being the running minimum. Let 
 \begin{align*}
\tau_{m, i + 1} = \inf\{t > \tau_{m, i} : U_{n_k}^C(t) = -m = -M_{n_k}^C\} \land T,
 \end{align*}
 be the consecutive times $U_{n_k}^C$ visits $-m$ when $m$ is the current value of $M_{n_k}^C$, up until time $T$. (Set $\tau_{m, 0} = 0$). That is, the consecutive times
 $U_{n_k}^C$ visits its running minimum at $-m.$ Let 
  \[
  z_{m,j}^+ = \inf\{ s > 0 : U_{n_k}^C(\tau_{m, j}^{(n_k)} + s)  > U_{n_k}^C(\tau_{m, j}^{(n_k)})\}
  \] and 
  \[
z_{m,j}^- = \inf\{ s > 0 : U_{n_k}^C(\tau_{m, j}^{(n_k)} + s)  < U_{n_k}^C(\tau_{m, j}^{(n_k)})\}
  \] 
denote the amount of time until the next positive and negative, respectively, jump of $U_{n_k}^C$ after the $j^{th}$ excursion starting at $-m$. Then $z_{m,j}^+ \land z_{m,j}^-$ is the time $U_{n_k}^C$ spends on $-m$ during its
 $j^{th}$ visit to its running minimum $-m$.
Since $|L_n^C| \leq C$, the stochastic intensity of $Z_{n_k}^C$ is $2^{n_k}|V_{n_k}^C|$, which is bounded below by $2^{2n_k} - v$ and above by $2^{2n_k} + C2^{n_k}$. We set $v = 0$ in the remaining computations for convenience. Therefore the positive jump times of $U_{n_k}^C = S_{n_k}^C + Z_{n_k}^C$ have intensity $2^{2n_k} + 2^{n_k}|V_{n_k}^C|$ and the negative jump times arrive with intensity $2^{2n_k}$. In other words, $z_{m,j}^- \dist \Exp(2^{2n_k}).$

By Lemma \ref{ch3:jumpRateStochDom} we assume the probability space contains two independent sequences of i.i.d.\ exponential random variables $\{\nu_{i, j}\}_{i,j\in \N}, \ \{e_{i,j}\}_{i,j\in \N}$ that are also independent of $z_{m,j}^-$ and of the position $m$
, with rates $2^{2n_k}$ and $C2^{n_k}$, respectively, and where 
\[
\nu_{m,j} \land e_{m,j} \leq z_{m,j}^+ \leq \nu_{m,j},
\] almost surely. Consequently,
\begin{align}\label{eq:rate_coupling}
z_{m,j}^- \land \nu_{m,j} \land e_{m, j} \leq z_{m,j}^- \land z_{m,j}^+ \leq z_{m,j}^- \land \nu_{m,j}, \text{ almost surely.}
\end{align}
Define
\[
A_j^m := 1_{\{z_{m,j}^- \land \nu_{m,j} = \nu_{m,j}\}}, \ B_j^m := 1_{\{z_{m,j}^- \land z_{m,j}^+ = z_{m,j}^+\}}, \ C_j^m := 1_{\{z_{m,j}^- \land \nu_{m,j} \land e_{m,j} = \nu_{m,j} \land e_{m,j} \}}.
\]
Note that $B_j^m$ is the indicator for whether $U_{n_k}^C$ jumped in the positive direction during its $j^{th}$ visit to its running minimum $-m.$
By construction $A_j^m, B_j^m, C_j^m$ are Bernoulli random variables and are coupled so that 
\begin{align}\label{eq:jump_sandwich}
A_j^m \leq B_j^m \leq C_j^m, \text{ almost surely.}
\end{align}
The definition of $A_j^m, B_j^m, C_j^m$ depend on $n_k$, which is hidden from notation.
While the sequence $(B_j^m: j \geq 1)$ is not an i.i.d.\ sequence, and is not a sequence of independent random variables
since the jump rate changes with time, both $(A_j^m: j \geq 1)$ and $(C_j^m: j \geq 1)$ are i.i.d.\ sequences of 
Bernoulli$(1/2)$, Bernoulli$([1 + C2^{-n_k}]/[2 + C2^{-n_k}]),$ respectively. 
For each $i \in \N$,
denote $Q_i$ as the number of visits to $-i2^{-n_k}$ by $U_{n_k}$ while $M_{n_k}^C = i2^{-n_k}.$ This is the number of visits $U_{n_k}^C$ makes to $-i2^{-n_k}$ when $-i2^{-n_k}$ is the running minimum. We will use \eqref{eq:jump_sandwich} to sandwich $Q_i$ above and below by geometric random variables. Denote $m_i = i2^{-n_k}$. Then
  \[
Q_i = \inf\{ j \geq 1 : B_j^{m_i} = 0 \}.
  \]
That is, $Q_i$ is the number of visits $U_{n_k}^C$ makes to its running minimum $-m_i$ because once a negative jump occurs, i.e. $B_j^{m_i}$ reaches 0, the running minimum decreases to $-(i + 1)2^{-n_k}$.
Consider 
\begin{align*}
W_i &= \inf\{j \geq 1 : A_j^{m_i} = 0\},\\
V_i &= \inf\{j \geq 1 : C_j^{m_i} = 0\}.
\end{align*}
Because $(A_j^{m_i}), (C_j^{m_i})$ are each i.i.d.\ sequences of Bernoulli random variables, $W_i, V_i$
are geometric random variables, and
 \begin{align*}
 &W_i \leq Q_i \leq V_i, \text{ almost surely,}\\
 &\prob(W_i = k) = \Big(\frac{1}{2}\Big)^{k},\\
 &\prob(V_i = k) = \Big(\frac{1}{2 + C/2^{n_k}}\Big)\Big(\frac{1 + C/2^{n_k}}{2 + C/2^{n_k}}\Big)^{k-1}.
 \end{align*}
 That is, $W_i, V_i$ are geometrically distributed with parameters 1/2, $(1 + C2^{-n_k})/(2 + C2^{-n_k})$ respectively. 

Now that we have sandwiched the number of steps $U_n^C$ makes at a certain level of its running minimum, we will analyze the Lebesgue time the process spends at its running minimum. Since the size of each step is $2^{-n_k}$, 
$M^C_{n_k}$ has visited between $2^{n_k}m- 1$ and $2^{n_k}m + 1$ sites up until time $\tau_{m, 1}$. 

 Let $T_i$ be the time that $M_{n_k}^C$ spends at the site $m_i$, for $0 \leq i2^{-n_k} =:m_i < M_{n_k}^C(s)$ and a given $s \in [0, T].$
By definition of $\nu_{m,j}, e_{m,j}, z_{m,j}$ and the inequality \eqref{eq:rate_coupling}, for $0 \leq i2^{-n_k} < M_{n_k}^C(s)$ we have
\begin{align}
\begin{split}\label{eq:Phi'_T_Phi_Ineq}
&\wt{\Phi}^{i}_{n_k} :=
\sum_{j = 1}^{W_i}z_{m_i, j}^- \land \nu_{m_i, j} \land e_{m_i, j}
\leq T_i \leq
\sum_{j=1}^{W_i}z_{m_i, j}^- \land \nu_{m_i, j} + \sum_{j = W_i}^{V_i}z_{m_i, j}^- \land \nu_{m_i, j}\\
&=: \Phi_{n_k}^i + \sum_{j = W_i}^{V_i}z_{m_i, j}^- \land \nu_{m_i, j},
\end{split}
\end{align}
almost surely. This is because $U_n^C$ spends at least $W_i$ steps at the running minimum $-m_i$, each step spending at least $z_{m_i, j}^- \land \nu_{m_i, j} \land e_{m_i, j}$ Lebesgue amount of time for each $1 \leq j \leq W_i,$ giving the lower bound. The upper bound is the same reasoning. By Lemma \ref{ch3:lemma:minOfExpsIndofIndicator}, $z_{m_i,j}^- \land \nu_{m_i,j}$ is an \Exp$(2^{2n +1})$ random variable independent from $A_j^{m_i}$. Because $z_{m_i, j}^- \land \nu_{m_i, j} \land e_{m_i, j}$ is a measurable function of
$z_{m_i, j}^- \land \nu_{m_i, j}$ and $e_{m_i, j}$, both of which are independent from $A_j^{m_i},$ $z_{m_i,j}^- \land \nu_{m_i,j} \land e_{m_i,j}$ is independent from $A_j^{m_i}$ as well. Consequently $z_{m_i, j}^- \land \nu_{m_i, j} \land e_{m_i, j}$, for $1\leq j \leq W_i$, are independent from $W_i.$ Similarly $V_i$ is independent of $z_{m_i, j}^- \land \nu_{m_i, j}$ for $1 \leq j \leq V_i.$ Since by definition,
\[
L_{n_k}^C(s) = \sum_{i = 1}^{2^{n_k}M_{n_k}^C(s)}2^{n_k}T_i,
\]
\eqref{eq:Phi'_T_Phi_Ineq} implies we can sandwich $L_{n_k}^C(s)$ by summing these upper bounds and lower bounds of times spent at each intermediate level. That is,
\begin{align}
\begin{split}
&\sum_{i = 1}^{2^{n_k}M_{n_k}^C(s) - 1}2^{n_k}\wt{\Phi}^{i}_{n_k} \leq L_{n_k}^C(s)\\ &\leq \sum_{i = 1}^{2^{n_k}M_{n_k}^C(s) + 1} 2^{n_k}\Phi^i_{n_k} + \sum_{i = 1}^{2^{n_k}M_{n_k}^C(s) + 1} \sum_{j = W_i}^{V_i}z_{m_i, j}^- \land \nu_{m_i, j}\\
&=: R_{n_k}(s),
\label{eq:sumLevelCounts}
\end{split}
\end{align}
and this inequality holds for all $s \in [0, T]$, almost surely.

Now we will apply the squeeze theorem to \eqref{eq:sumLevelCounts} and show the left hand and right hand of that inquality, and hence $L_{n_k}$, converge to $M^U$ on $[0, T]$ in probability, hence for some subsequence of $n_k$ the convergence holds almost surely.  The sum of a Geometric($p$) number of independent exponentials of rate $\ld$ is exponential with rate $p\ld$ provided the number of exponential random variables being summed is independent of the exponential random variables themselves. Therefore, since $W_i$ (resp. $V_i$) is geometric and independent of $z_{m_i, j}^- \land \nu_{m_i, j} \land e_{m_i, j}$ for $1 \leq j \leq W_i$ (resp. $z_{m_i, j}^- \land \nu_{m_i, j}$ for $1 \leq j \leq V_i$), $\wt{\Phi}^{i}_{n_k}$ is 
distributed as an exponential of rate $(2^{2n_k+1} + C2^{n_k})/2$. Similarly $\Phi^i_{n_k}$ has exponential rate $2^{2n_k}.$
We think of $2^{n_k}\wt{\Phi}^{i}_{n_k}$ as an exponential random variable with rate approximately
$2^{n_k},$ while in fact $2^{n_k}\Phi^i_{n_k}$ is an exponential with rate exactly $2^{2n_k}.$ Because 
$\wt{\Phi}_{n_k}^i$ is measurable with respect to $\mc{F}_{(\tau_{m_i, 1}, \tau_{m_{i + 1}, 1}]}$ and independent from $\mc{F}_{\tau_{m_i, 1}}$,
$(\wt{\Phi}_{n_k}^i : 1 \leq i \leq W_i)$ is a collection of independent exponential random variables by the strong Markov property. In other words, $\wt{\Phi}_{n_k}^i$ depends only on the excursion between these two hitting times and does not depend on the initial position of these excursions.
Define
\[
T_C := \inf\{t> 0 : L(t) > C \}
\]
where $L = \lim_{n_k}L_{n_k}$. For any $0<\e<C$, it is clear that $\liminf_{n_k}T_{C}^{(n_k)} \geq T_{C -  \e}$, and as a result 
\[
L_{n_k}^C  = L_{n_k}, U_{n_k}^C = U_{n_k}
\] on $[0,  T \land T_{C - \e}]$ for large enough $n_k$, almost surely.
Because $U_{n_k}^C(\cdot)$ converges uniformly on $[0, T]$ to the continuous process $U$, almost surely, we know
 $M_{n_k}^C(\cdot)$ converges uniformly on $[0,  T \land T_{C - \e}]$ to $M^{U}$.
We will show that the left and right hand sides of \eqref{eq:sumLevelCounts} converge almost surely to $M^{U}(s\land T_{C-\e})$ for each fixed $s \in [0, T]$. We go through the details for the left hand side, and the right hand is similar. For ease of notation we denote  $s' = s\land T_{C-\e}.$
Note
\begin{align}
\begin{split}\label{ch3:DifferenceSums}
&\left|\sum_{i = 1}^{2^{n_k}M_{n_k}^C(s')}2^{n_k}\wt{\Phi}^{i}_{n_k} - \sum_{i = 1}^{2^{n_k}M^{U^C}(s')}2^{n_k}\wt{\Phi}^{i}_{n_k}\right|\\
&\leq \sum_{i = 2^{n_k}[M_{n_k}^C(s') \land M^{U}(s')]}^{2^{n_k}[M_{n_k}^C(s') \lor M^{U}(s')]}2^{n_k}\wt{\Phi}^{i}_{n_k} \leq \frac{1}{2^{n_k}}\sum_{i = 1}^{2^{n_k}|M_{n_k}^C(s') - M^{U}(s')|}e_i, 
\end{split}
\end{align}
almost surely, where $e_i$ are i.i.d.\ Exp$(1)$, and that are independent from $M_{n_k}^C$. The last inequality comes from Lemma \ref{ch3:jumpRateStochDom} and the fact that $2^{n_k}\wt{\Phi}^{i}_{n_k} \dist \exp(2^{n_k} + C/2)$ are i.i.d.\ Since
$|M_{n_k}^C(s') - M^{U}(s')| \to 0$, almost surely, the strong law of large numbers implies that 
\[
\frac{1}{2^{n_k}}\sum_{i = 1}^{2^{n_k}|M_{n_k}^C(s') - M^{U}(s')|}e_i \lra 0, \text{ almost surely}.
\]
We can express $2^{n_k}\wt{\Phi}^{i}_{n_k}$ as $2^{-n_k}u_i^k$ where $u_i \dist \exp(1 + C2^{-(n_k + 1)})$ are i.i.d., so
\[
\sum_{i = 1}^{2^{n_k}M^{U}(s')}2^{n_k}\wt{\Phi}^{i}_{n_k} = \frac{1}{2^{n_k}}\sum_{i = 1}^{2^{n_k}M^{U}(s')}u_i^k.
\]
We condition on $M^{U}(s')$ to compute
\begin{align*}
\Var\left( \frac{1}{2^{n_k}}\sum_{i = 1}^{2^{n_k}M^{U}(s')}u_i^k \Big| M^{U}(s')\right) &= \frac{1}{2^{2n_k}}2^{n_k}M^{U}(s')\frac{1}{(1 + C2^{-(n_k + 1)})^2}\\
 &= \frac{M^{U}(s')}{2^{n_k} + C + C^22^{-(n_k + 2)}},
\end{align*}
which approaches zero. The expectation conditional on $M^{U}(s')$ is
\[
\ex\left( \frac{1}{2^{n_k}}\sum_{i = 1}^{2^{n_k}M^{U}(s')}u_i^k \Big| M^{U}(s')\right) = M^{U}(s')\frac{1}{1 + C2^{-(n_k + 1)}}.
\]
Consequently, 
\[
\sum_{i = 1}^{2^{n_k}M^{U}(s')}2^{n_k}\wt{\Phi}^{i}_{n_k} = \frac{1}{2^{n_k}}\sum_{i = 1}^{2^{n_k}M^{U}(s')}u_i^k \lra M^{U}(s'),
\]
in probability, and by \eqref{ch3:DifferenceSums}, 
\begin{align}
\sum_{i = 1}^{2^{n_k}M_{n_k}^C(s')}2^{n_k}\wt{\Phi}^{i}_{n_k} \lra M^{U}(s'), \label{ch3:leftConv}
\end{align}
in probability. Similarly for the right side of \eqref{eq:sumLevelCounts}, one can show 
\begin{align}
\sum_{i = 1}^{2^{n_k}M_{n_k}^C(s')}2^{n_k}\Phi^{i}_{n_k} \lra M^{U}(s'),\label{ch3:rightConv}
\end{align}
in probability. Now the term $R_{n_k}(s')$ converges to $M^{U}(s')$ once we demonstrate
\[
\sum_{i = 1}^{2^{n_k}M_{n_k}^C(s') + 1}\sum_{j = W_i}^{V_i}z_{m_i, j}^{-} \land \nu_{m_i, j}
\] 
converges to zero in probability. But this follows from two applications of Wald's lemma, the second of which
uses the filtration generated (for fixed $i$) by $z_{m_i, j}^-, \nu_{m_i, j}$ and $e_{m_i, j}$ to compute
\begin{align*}
\ex\left(\sum_{j = W_i}^{V_i}z_{m_i, j}^{-} \land \nu_{m_i, j}\right) &= \left(\ex(V_i - W_i) + 1\right)\ex(z_{m_i, j}^- \land \nu_{m_i, j}) \\
&= \left(2 - \frac{2 + C2^{-n_k}}{1 + C2^{-n_k}}\right)\cdot \frac{1}{2^{2n_k + 1} + C2^{n_k}} =: a_{n_k},
\end{align*}
which clearly approaches zero. Using Lemma \ref{ch3:momentBoundMn}, the moment bound hypothesis of Wald's equation is satisfied since we have $\ex(M_n^C(s'))\leq\ex(M_n^C(T)) \leq 2(2T + T|v|(2T)^{1/2} + 2|v|T)^{1/2}$, a 
uniform bound with respect to $n$. Thus, the second application of Wald's lemma gives
\begin{align*}
\ex\left(\sum_{i = 1}^{2^{n_k}M_{n_k}^C(s') + 1}\sum_{j = W_i}^{V_i}z_{m_i, j}^{-} \land \nu_{m_i, j}\right) &=
\ex(2^{n_k}M_{n_k}^C(s') + 1)\cdot a_{n_k},
\end{align*}
which approaches zero as $n_k \to \infty$. Consequently,
\[
\sum_{i = 1}^{2^{n_k}M_{n_k}^C(s') + 1}\sum_{j = W_i}^{V_i}z_{m_i, j}^{-} \land \nu_{m_i, j}
\] does indeed converge to zero in probability, and therefore 
$R_{n_k}(s')$ converges to $M^{U^C}(s')$ in probability.

Because convergence in probability implies almost sure convergence for some subsequence, we can find a common subsequence $n_k'$ where both \eqref{ch3:leftConv} and \eqref{ch3:rightConv} occur almost surely, for fixed $s$, where $s' = s \land T_{C-\e}$. We relabel $n_k'$ as $n_k$. Similarly we can use a cantor diagonalization to find a further subsequence where \eqref{ch3:leftConv} and \eqref{ch3:rightConv} occur for all rationals in $[0, T\land T_{C-\e}]$, almost surely.
Applying the squeeze theorem to the inequality \eqref{eq:sumLevelCounts} then yields
\begin{align}\label{eq:squeeze}
0 = \lim_{n_k \to \infty}|L_{n_k}^C(s') - M^{U}(s')| = \lim_{n_k \to \infty}|L_{n_k}(s') - M^{U}(s')| =  |L(s') - M^U(s')|
\end{align}
where $s' = s\land T_{C-\e}$, for any $s \in \mathbb{Q} \cap [0, T]$, almost surely.
Therefore $L = \lim_{n_k}L_{n_k}  = \lim_{n_k}L_{n_k}^C = M^{U^C} = M^U$ for all rational numbers in $[0, T_{C-\e} \land T] \subset [0, \liminf_{n_k}T_{n_k}^{C} \land T],$ almost surely. So $L = M^U$ on $[0, T_{C - \e} \land T]$, almost surely, since both processes are continuous.

Letting $C$ approach infinity, $\prob(T_C \geq T) \lra 1$, since $L$ is a finite process, which yields $M^U = L$ on $[0, T],$ almost surely, completing the proof of (iii).
\end{proof}

\begin{proof}[Proof of (iv)]
As in the other proofs, Remark \ref{ch3:remark:convInUnif} and Lemma \ref{ch3:lemma:tightness} allow us to assume without loss of generality that for the subsequence $(S_{n_m}, Z_{n_m}, V_{n_m}, L_{n_m})$,
\begin{equation}
(S_{n_m}, Z_{n_m}, V_{n_m}, L_{n_m}) \lra (S, Z, V, L)
\label{eq:assum}
\end{equation}
almost surely, in the uniform norm on $C([0, T], \R);$ we have $U_{n_m} \to U$
on $C([0, T], \R)$ as well. In the previous proof of (iii) we showed $L(t) = M^U(t)$ for each $t \in [0, T]$, almost surely. In this proof we wish to show
\begin{equation}
Z(t) = \int\limits_0^tV(x)\, \text{d}x, \ \text{for } t\in[0, T],
\label{eq:eqZ}
\end{equation}
almost surely, where $V = KM^U - v.$ We take $v \leq 0$ for the time being and reduce
to this case at the end. It suffices to demonstrate that for each $s \in [0, T]$ there is a subsequence $n_m'$
such that
\begin{align}
Z_{n_m'}(s) {\longrightarrow} \int\limits_0^s V(x)\,\text{d}x \label{eq:limitZ}, \text{ almost surely.}
\end{align}
By a Cantor diagonalization $Z(\cdot)$ and 
$\ds \int_0^\cdot M^U(s)\,\text{d}s$ will agree for all rationals in $[0, T],$ almost surely. The two processes will then agree on $[0, T]$, almost surely,
because both processes are continuous.
For a given $n,$ 
\[
\widetilde{Z}_n(s) := 2^nZ_n(s)
\]
counts the
number of jumps of $Z_n$ by time $s.$ Equivalently, this counts the
number of arrival times $\{u_k : k \geq 1\}$ of jumps
by the process $Z_n.$ (We hide the dependence of $u_k$ on $n$ for convenience). 
For $C > 0$, let
\begin{align*}
&\tau_C^{(n)} = \inf\{ s > 0 : V_n(s) > C\},\\
&\overline{\alpha}_k = \sup_{s \in {[u_k, u_{k+1}]}}V_n(s),\\
&\underline{\alpha}_k = \inf_{s \in {[u_k, u_{k+1}]}}V_n(s).
\end{align*}
Assume for the time being that for every fixed $\delta > 0$,
\begin{align}
\sup\{(u_{i+1} - u_i)  : \wt{Z}_{n_m}(\delta \land s) \leq i \leq \widetilde{Z}_{n_m}(s)\}
\lra 0, \text{ in probability.}
\label{eq:net}
\end{align}
Then there exists a subsequence $n_m'$, which we relabel as $n_m$, such that $\sup\{(u_{i+1} - u_i)  : \wt{Z}_{n_m}(\delta \land s) \leq i \leq \widetilde{Z}_{n_m}(s)\} \lra 0,$ almost surely. 
We use the time between jumps, $u_{k+1} - u_k$, as the time step in a Riemann sum
approximation of the integral in \eqref{eq:limitZ}. 
By the definition
of $\overline{\alpha}_k, \underline{\alpha_k}$ and the exponential representation of the gap times given in \eqref{eq:Exponential_Rep_JumpTimes}, there is a 
sequence $\mu_k$ of i.i.d. $\Exp(2^n)$ random variables such that

$$\underline{\alpha}_k(u_{k+1} - u_k) \leq \mu_k \leq
\overline{\alpha}_k(u_{k+1} - u_k),$$
almost surely. Therefore,
\begin{equation}
\sum_{k = \wt{Z}_{n_m}(\delta\land s)}^{\wt{Z}_{n_m}(s)}\underline{\alpha}_k(u_{k+1} - u_k)
\leq \sum_{k = \wt{Z}_{n_m}(\delta \land s)}^{\wt{Z}_{n_m}(s)}\mu_k
\leq \sum_{k = \wt{Z}_{n_m}(\delta \land s)}^{\wt{Z}_{n_m}(s)}\overline{\alpha}_k(u_{k+1} - u_k)
\label{eq:bound}
\end{equation}
where we define the left and right sums to be zero should the set of such 
indices $\wt{Z}_{n_m}(\delta \land s) \leq k \leq \wt{Z}_{n_m}(s)$ be empty.

From \eqref{eq:net} together with \eqref{eq:assum} and Riemann integrability
of the limiting function $V$, 
\begin{equation}
\lim_{n_m' \to \infty}\sum\limits_{k = \wt{Z}_{n_m}(\delta\land s)}^{\wt{Z}_{n_m}(s)}\underline{\alpha}_k(u_{k+1} - u_k) 
= \int\limits_{\delta \land s}^sV(x)\, \mathrm{d}x 
= \lim_{n_m' \to \infty}\sum_{k = \wt{Z}_{n_m}(\delta \land s)}^{\wt{Z}_{n_m}(s)}\overline{\alpha}_k(u_{k+1} - u_k),
\label{eq:squeeze}
\end{equation}
where convergence holds uniformly on $[0, T]$, almost surely. By the squeeze theorem,
\begin{equation}
\sum_{k = \wt{Z}_{n_m}(\delta \land s)}^{\wt{Z}_{n_m}(s)}\mu_k
\lra \int\limits_{\delta \land s}^sV(x)\, \mathrm{d}x,
\end{equation}
almost surely, as well.
\noindent
Since the $\mu_k$ are i.i.d.\ exponential r.v.'s of rate $2^{n_m}$ and 
$\wt{Z}_{n_m}(s) = 2^{n_m}Z_{n_m}(s)$ with $Z_{n_m}(\cdot) \to Z(\cdot)$
almost surely, the law of large numbers implies
\[
\sum_{k = \wt{Z}_{n_m}(\delta \land s)}^{\wt{Z}_{n_m}(s)}\mu_k \overset{a.s.}{\lra}
Z(s) - Z(\delta \land s),
\]
for each $s \in [0, T].$
Therefore 
\begin{align}
Z(s) - Z(\delta \land s) =\int\limits_{\delta \land s}^sV(x)\, \mathrm{d}x\label{ch3:ivEq1}
\end{align}
for each $s$ in $[0, T]$, almost surely. Since $Z(\delta) \lra 0$ almost surely and $\int_0^\delta V(x)\, \mathrm{d}x \lra 0,$ as $\delta \to 0$, this gives 
\[
Z(s) = \int_0^sV(x)\, \mathrm{d}x
\]
as desired.




To demonstrate \eqref{eq:net},
recall the jump process $Z_{n_k}$ determining the gap between jump times $u_{i+1} - u_i$ has an intensity process $2^{n_k}|V_{n_k}|$ that is bounded below by $\e2^{n_k}$ on the interval $[\tau^{(n_k)}_\e, \infty).$ Heuristically, on this interval the intensity cannot
be too small so the inter-arrival times are not too large. (This is where we use the fact that $v\leq0$, so that $|V_{n_k}| = V_{n_k}$. In the case that $v > 0$ the intensity $2^{n_k}|V_{n_k}|$ will cross zero, which we handle at the end.) By Lemma \ref{ch3:jumpRateStochDom} there exists
an i.i.d.\ sequence $v_i$ of exponential random variables with rate $\e2^{n_m}$ that stochastically dominate $u_{i+1} - u_i.$ 
We have
\begin{align}
\{\tau_\e^{(n_m)} > \d\} = \{V_{n_m}(\d) \leq \e\}.
\label{eq:limsupST}
\end{align}
For $0 <\eta \ll 1, C > 0,$
\begin{align*}
&\prob(\sup\{(u_{i+1} - u_i) : \widetilde{Z}_{n_m}(\delta) \leq i \leq \widetilde{Z}_{n_m}(t)\} > \eta)\\
&\leq \prob(\sup\{(u_{i+1} - u_i) : \widetilde{Z}_{n_m}(\tau_\e^{(n_m)}) \leq i \leq \widetilde{Z}_{n_m}(t)\} > \eta, \tau^{(n_m)}_\e \leq \delta) + \prob(\tau_\e^{(n_m)} > \delta)\\ 
&\leq
\prob(\sup\{v_i : 1 \leq i \leq \widetilde{Z}_{n_m}(t)\} > \eta) + \prob(\tau_\e^{(n_m)} > \delta)\\
&\leq \prob(\sup\{v_i, 1 \leq i \leq C2^{n_m}\} > \eta, \wt{Z}_{n_m}(t) \leq C2^{n_m}) + \prob(\wt{Z}_{n_m}(t) > C2^{n_m}) + \prob(\tau_\e^{(n_m)} > \delta)\\
&\leq \prob(v_i > \eta : \text{ some } 1 \leq i \leq C2^{n_m}) 
+ \prob(Z_{n_m}(t) > C) + \prob(\tau_\e^{(n_m)} > \delta) \\
&\leq C2^{n_m}\prob(v_i > \eta) + \prob(Z_{n_m}(t) > C)+ \prob(\tau_\e^{(n_m)} > \delta)\\
&\leq C2^{n_m}\exp(-\eta\e2^{n_m}) + \prob(Z_{n_m}(t) > C) + \prob(\tau_\e^{(n_m)} > \delta), \\
&= C2^{n_m}\exp(-\eta\e2^{n_m}) + \prob(Z_{n_m}(t) > C) + \prob(V_{n_m}(\d) \leq \e), \text{ by \eqref{eq:limsupST}}.
\end{align*}
Taking $\limsup$ with respect
to $n_m$ on both sides and applying the assumption that $Z_n \to Z$ and $V_n \to V$ almost
surely, we have
\begin{align*}
&\limsup_{n_m\to\infty}\prob(\sup\{(u_{i+1} - u_i) : \widetilde{Z}_{n_m}(\delta) \leq i \leq \wt{Z}_{n
_m}(t)\} > \eta) \leq \prob(Z(t) > C) + \prob(V(\d) \leq \e).
\end{align*}
Since $C, \e > 0$ are arbitrary and $V(0) \geq 0$ on our assumption $v \leq 0$,
\[
\limsup_{n_m\to\infty}\prob(\sup\{(u_{i+1} - u_i) : \widetilde{Z}_{n_m}(\delta)
 \leq i \leq \wt{Z}_{n_m}(t)\} > \eta) = 0,
 \]
for every fixed $\delta > 0$, proving \eqref{eq:net}.

To show the case $v > 0$ reduces to $v = 0$, notice that
\begin{align}\label{eq:bound_T_e}
 T_{-3\e/2} \leq \liminf_{n_m \to \infty}\tau_{-\e}^{(n_m)} \leq \limsup_{n_m \to \infty}\tau_{\e}^{(n_m)} \leq T_{3\e/2},
 \end{align}
 where
 \[
 T_a := \inf\{t>0:V(t)>a\}.
 \]
 For almost each $\omega$ in our probability space there is an $N(\omega)$ such that $|V_{n_m}|$ is 
 monotone and bounded away from zero on the intervals
  $[0, \liminf_{n_m \to \infty} \tau_{-\e}^{(n_m)}]\supset [0, T_{-3\e/2}\land T]$ and $[\limsup_{n_m \to \infty}\tau_{\e}^{(n_m)}, T]\supset [T_{3\e/2}\land T, T]$, for 
 all $n_m \geq N(\omega).$ With this fact and \eqref{eq:bound_T_e} we can apply the proof thus far to show 
 \[
 Z(t)-Z(s) = \int_s^tV(x)\, \mathrm{d}x \text{ for } s, t \in [0, T_{-3\e/2} \land T], \text{ or } s, t \in [T_{3\e/2} \land T, T].
\]
In addition to this, an $L_\infty$ bound gives
\[
\int_{T_{-3\e/2} \land T}^{T_{3\e/2} \land T}|V(x)|\, \mathrm{d}x \leq (3\e/2)T, \text{ almost surely, }
\]
which goes to zero as $\e \to 0.$
It follows that $\ds Z(s) = \int_0^s V(x)\, \mathrm{d}x$ for $s \in [0, T]$ in the case $v < 0$ as well.
\end{proof}

\section{Mutlidimensional Analog of BMID} \label{sec:mutli}
In 2007, White constructed a multidimensional analog, see \cite{white2007}, whose stationary distribution was found by Bass, Burdzy, Chen and Hairer \cite{bass2010stationary}. This multidimensional analog is a pair of processes $(Z, V)$ where
$Z$ is a diffusion reflecting inside a sufficiently smooth domain $D \subset \R^n$, and $V$ is its drift.
This drift is the inward normal integrated against the local time $Z$ spends on $\partial D$. 
That is,
\begin{align}
\begin{split}
Z(t) &= B(t) + \int_0^t\eta(Z(s))\,\mathrm{d}L(s) + \int_0^tV(s)\, \mathrm{d}s, \label{ch3:eq1}\\
V(t) &= V_0 + \int_0^t\eta(Z(s))\, \mathrm{d}L(s),
\end{split}
\end{align}
\noindent
where $\eta(x)$ is the inward unit normal for $x \in \partial D$ and $t \to L(t)$ is a nondecreasing
continuous function flat off of $\partial D.$ By this we mean $L$ increases only on $Z^{-1}(\partial D).$
The authors show $(Z, V)$ has a stationary distribution of $\mu \times \gamma$, where $\mu$ is the uniform
distribution on $D$ and $\gamma$ is the Gaussian distribution on $\R^n$. This is interesting in part because the stationary distribution of the drift is always Gaussian and does not depend on $D$, and also because the stationary distribution is always a product form.
When $Z$ is one dimensional, and $D = [0, \infty)$, the process $Z$ is one dimensional reflected BMID which is the process introduced by Knight. 

\section*{Acknowledgements}
CB is a Zuckerman Postdoctoral scholar at Technion-Israel's Institute of Technology, Industrial Engineering and Management, Haifa, Israel, 32000. 
The preparation of this manuscript was partially supported by FNS 200021\_175728/1. During this research the author was graduate student at the University of Washington and visited Universidad de Chile.

\end{document}